\newtheorem{thm}{Theorem}[section]
\newtheorem{cor}[thm]{Corollary}
\newtheorem{cla}[thm]{Claim}
\newtheorem{lem}[thm]{Lemma}
\newtheorem{prop}[thm]{Proposition}
\theoremstyle{definition}
\newtheorem{defn}[thm]{Definition}
\newtheorem{rem}[thm]{Remark}
\newtheorem{var}[thm]{Variant}
\numberwithin{equation}{section}
\begin{document}

\title[]{Vanishing for Hodge ideals of $\mathbb{Q}$-divisors}%
\dedicatory{}
\author{Bingyi Chen}
\address{Department of Mathematical Sciences,
Tsinghua University,
Beijing, 100084, P. R. China.}
\email{chenby16@mails.tsinghua.edu.cn}

\thanks{}%
\subjclass{}%
\keywords{}%

\begin{abstract}
Musta\c t\u a  and Popa \cite{MP19a} introduce the notion of Hodge ideals for an effective $\mathbb{Q}$-divisor $D$ and prove a vanishing theorem for Hodge ideals, which generalizes Nadel vanishing for multiplier ideals. However, their proof needs an extra assumption on the existence of $\ell$-roots of the line bundle $\mathscr{O}_X(\ell D)$, which is not necessary for  Nadel vanishing. In this paper, we prove that vanishing for Hodge ideals still holds even without this assumption.
\end{abstract}
\maketitle


\section{introduction}
Let $X$ be a smooth complex variety and $Z$ a reduced integral effective divisor on $X$. Denote by $\mathscr{O}_X(*Z)$ the sheaf of rational functions on $X$ with poles along $Z$. By Saito's theory of mixed Hodge module (\cite{Sai88},\cite{Sai90}), this sheaf is a left $\mathscr{D}_X$-module which underlies the mixed Hodge module $j_{*}\mathbb{Q}_U^H[n]$, where $U$ is the complement of $Z$ and $j$ is the open embedding from $U$ to $X$. There are two filtrations on $\mathscr{O}_X(*Z)$, the Hodge filtration (denoted by $F_{\bullet}$) and the pole order filtration. Musta\c t\u a  and Popa \cite{MP16} introduce the notion of Hodge ideals to measure the difference of these two filtrations. More precisely, the Hodge ideals $\{I_k(Z)\}_{k\in \mathbb Z}$ of the divisor $Z$ are defined by
$$F_k\mathscr{O}_X(*Z)=I_k(Z)\otimes_{\mathscr{O}_X}\mathscr{O}_X\big((k+1)Z\big) \quad \text{for any } k \in \mathbb{Z}.$$
It turns out that $I_0(Z)=\mathcal{J}((1-\varepsilon)Z)$, the multiplier ideal of $(1-\varepsilon)Z$ where $\varepsilon$ is a sufficiently small positive number. Therefore, Hodge ideals can be viewed as generalizations of multiplier ideals.

In \cite{MP19a} and \cite{MP18}, the authors extend the notion of Hodge ideals to an arbitrary effective $\mathbb{Q}$-divisor $D$ on $X$. Locally we can assume that $D=\alpha\cdot \text{div}(h)$ for some nonzero regular function $h$ and some positive rational number $\alpha$. Denote by $Z$  the support of $D$. Instead of $\mathscr{O}_X(*Z)$ in the case of integral divisors, we consider the left $\mathscr{D}_X$-module $\mathscr{O}_X(*Z)h^{-\alpha}$, a rank 1 free $\mathscr{O}_X(*Z)$-module generated by the symbol $h^{-\alpha}$, on which the action of a derivation $P$ of $\mathscr{O}_X$ acts via the rule
$$P(wh^{-\alpha})=\left(P(w)-\alpha w\frac{P(h)}{h}\right)h^{-\alpha}.$$
Unlike $\mathscr{O}_X(*Z)$, this $\mathscr{D}_X$-module does not necessarily underlie a mixed Hodge module. However, there is a natural filtration on $\mathscr{O}_X(*Z)h^{-\alpha}$, denoted by $F_{\bullet}$, which makes it a filtered direct summand of some filtered $\mathscr{D}_X$-module underlying a mixed Hodge module. Similarly as the case of integral divisors, the Hodge ideals $\{I_k(D)\}_{k\in \mathbb{Z}}$ of $D$ are defined by the difference between the filtration $F_{\bullet}$ and the pole order filtration
$$F_k \big(\mathscr{O}_X(*Z)h^{-\alpha}\big)=I_k(D)\otimes_{\mathscr{O}_X} \mathscr{O}_X(kZ)h^{-\alpha}.$$
It's standard to check that the definition of Hodge ideals is independent of the choice of $h$ and $\alpha$. Therefore they can be defined globally on $X$.

However, $\mathscr{O}_X(*Z)h^{-\alpha}$ can not be defined globally since its definition depends on $\alpha$ and $h$. In order to obtain a global $\mathscr{D}_X$-module, we need a extra assumption: there is a line bundle $\mathcal{L}$  and an integer $l$ such that $l\cdot D$ is an integral divisor and 
$$\mathscr{O}_X(l\cdot D)\simeq \mathcal{L}^{\otimes l},$$
which is called the global assumption. With this assumption, in \cite[B.5]{MP19a} the authors construct a global filtered $\mathscr{D}_X$-module, denoted by $(\mathcal{M},F_{\bullet})$, which is locally filtered isomorphic to $\mathscr{O}_X(*Z)h^{-1/l}$. Moreover, $(\mathcal{M},F_{\bullet})$ underlies a direct summand of a mixed Hodge module. Therefore, by Saito's vanishing theorem \cite[\S 2.g]{Sai90}, given a ample line bundle $\mathcal{A}$ we have
\begin{align}\label{yong}
\mathbf{H}^{i}(X,\text{Gr}^F_k(\text{DR}(\mathcal{M}))\otimes_{\mathscr{O}_X} \mathcal{A})=0
\end{align}
for any integer $k$ and $i>0$, where 
$$\text{Gr}^F_k(\text{DR}(\mathcal{M}))=[\text{Gr}^F_k\mathcal{M}\rightarrow \Omega_X^1\otimes_{\mathscr{O}_X} \text{Gr}^F_{k+1}\mathcal{M}\rightarrow \cdots \rightarrow  \Omega_X^n\otimes_{\mathscr{O}_X} \text{Gr}^F_{k+n}\mathcal{M}]$$ 
(placed in degrees $-n,\cdots,0$) is the associated graded complex for the filtration on $\text{DR}(\mathcal{M})$ which is induced by the filtration $F_{\bullet}\mathcal{M}$ on $\mathcal{M}$. 

Under the global assumption, with the help of the vanishing (\ref{yong}), Musta\c t\u a  and Popa \cite{MP19a} show a vanishing theorem for Hodge ideals $\{I_k(D)\}_{k\in \mathbb{Z}}$ that in the case $k=0$ is nothing else but Nadel vanishing for multiplier ideals. However, the global assumption is not necessary for Nadel vanishing. They therefore conjecture that the vanishing still holds even without this extra hypothesis.

In this paper, we will remove the global assumption in  the vanishing theorem for Hodge ideals of $\mathbb{Q}$-divisors. This should be important for the sake of applications. Our main theorem is 

\begin{thm}[=Theorem \ref{van}]\label{thm1}
Let $X$ be a smooth projective complex variety of dimension $n$ and $D$ an effective $\mathbb{Q}$-divisor on $X$. Denote by $Z$ the support of $D$. For some $k\geq 0$, assume that $(X,D)$ is reduced $(k-1)$-log-canonical, i.e. $I_0(D)=\cdots=I_{k-1}(D)=\mathscr{O}_X(Z-\lceil D\rceil)$.

(1) Let $\mathcal{L}$ be a line bundle such that $\mathcal{L}-D$ is ample. If $\mathcal{L}((p+1)Z-\lceil D\rceil)$ is ample for all $0\leq p \leq k-1$, then
$$H^i(X,\omega_X \otimes \mathcal{L}(kZ)\otimes I_k(D))=0$$
for all $i \geq 2$. Moreover,
$$H^1(X,\omega_X \otimes \mathcal{L}(kZ)\otimes I_k(D))=0$$
holds if $$H^{k-p}\big(X,\Omega_X^{n-k+p}\otimes \mathcal{L}((p+1)Z-\lceil D\rceil)\big)=0$$ for all $0\leq p \leq k-1$.

(2) If there exists an ample effective divisor with support contained in $Z=\text{Supp}(D)$, then (1) also holds for $\mathcal{L}$ such that $\mathcal{L}-D$ is nef.
\end{thm}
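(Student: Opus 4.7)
My approach is to reduce to the vanishing already proven by Musta\c t\u a--Popa in \cite{MP19a} under the global assumption, by passing to a suitable finite flat cover $\pi\colon Y\to X$ on which the global assumption does hold for $\pi^*D$, and then descending the vanishing back to $X$. Concretely, I would use a Bloch--Gieseker-type covering lemma to produce a smooth projective $Y$ with a finite flat morphism $\pi\colon Y\to X$ and a line bundle $\mathscr{M}$ on $Y$ satisfying $\pi^*\mathscr{O}_X(\ell D)\simeq\mathscr{M}^{\otimes\ell}$, for some integer $\ell$ making $\ell D$ integral. Choosing the defining section generically, I can ensure that $Y$ is smooth and that $\pi^{-1}(Z)$ remains a simple normal crossings divisor, so that $\widetilde D:=\pi^*D$ is an effective $\mathbb{Q}$-divisor with support $\widetilde Z:=\pi^{-1}(Z)$ satisfying the global assumption.

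Next I would verify that the positivity and log-canonicity hypotheses transfer from $(X,D)$ to $(Y,\widetilde D)$. The conditions that $\mathcal{L}-D$ and $\mathcal{L}\bigl((p+1)Z-\lceil D\rceil\bigr)$ be ample (or nef) are preserved under the finite pullback $\pi^*$, as is the existence of an ample effective divisor with support contained in $Z$ needed for part (2). The reduced $(k-1)$-log-canonical hypothesis follows from a compatibility of the Hodge ideals under $\pi$, ideally the identification $\pi^*I_p(D)=I_p(\widetilde D)$ for $0\le p\le k-1$.

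Applying the Musta\c t\u a--Popa vanishing on $Y$ with line bundle $\pi^*\mathcal{L}$ then yields
$$H^i\bigl(Y,\,\omega_Y\otimes\pi^*\mathcal{L}(k\widetilde Z)\otimes I_k(\widetilde D)\bigr)=0$$
in the prescribed range. To descend to $X$, I would use the projection formula together with Grothendieck duality for the finite flat morphism $\pi$ (which gives $\pi_*\omega_Y\simeq\omega_X\otimes(\pi_*\mathscr{O}_Y)^\vee$) and the splitting $\mathscr{O}_X\hookrightarrow\pi_*\mathscr{O}_Y$ supplied by the normalized trace in characteristic zero. These combine to exhibit $\omega_X\otimes\mathcal{L}(kZ)\otimes I_k(D)$ as a direct summand of $\pi_*$ of the sheaf on $Y$ above; since $\pi_*$ is exact ($\pi$ being finite) and cohomology respects direct summands, the vanishing on $Y$ forces the desired vanishing on $X$.

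The main obstacle is the compatibility of Hodge ideals under $\pi^*$: identifying $\pi^*I_k(D)$ with $I_k(\widetilde D)$, possibly after a correction supported along the ramification divisor of $\pi$. Unlike for multiplier ideals this is not immediate for higher Hodge ideals, since it requires understanding how the filtered $\mathscr{D}$-module $\mathscr{O}_X(*Z)h^{-\alpha}$ and its Hodge filtration transform under a finite flat, possibly ramified, morphism. A natural strategy is to refine the cover so that $\pi$ is étale in a neighborhood of the generic point of each component of $Z$, thereby reducing the compatibility to the étale case, where it follows immediately from the local construction of the Hodge ideals.
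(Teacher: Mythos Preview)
Your approach is genuinely different from the paper's, and as written it has a real gap.

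The paper does not pass to a cover. Instead, it observes that although the filtered $\mathscr{D}_X$-module $\mathscr{O}_X(*Z)h^{-\alpha}$ does not globalize, the quotient sheaves
\[
\mathcal{M}_k(D)=\frac{\omega_X(kZ)\otimes I_k(D)}{\omega_X((k-1)Z)\otimes I_{k-1}(D)}
\]
and a Spencer-type complex $\mathrm{Sp}_k(\mathcal{M}_{\bullet}(D))$ built from them \emph{do} make sense globally. The paper then identifies $\mathrm{Sp}_{-k}(\mathcal{M}_{\bullet}(D))$ with $\mathbf{R}f_*\bigl(\Omega_Y^k(\log E)(-\lceil f^*D\rceil)\bigr)[n-k]$ for a log resolution $f$, and proves the required hypercohomology vanishing directly from an Esnault--Viehweg/Nakano-type argument on the resolution. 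Saito's vanishing and the global assumption are bypassed entirely.

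The gap in your argument is precisely the one you flag: the compatibility $I_p(\pi^*D)=\pi^*I_p(D)$. Your proposed fix---arranging $\pi$ to be \'etale near the generic points of the components of $Z$---is not enough. Hodge ideals are ideal \emph{sheaves}, and you need the identification at every point, in particular along the locus where the branch divisor of a Bloch--Gieseker cover meets $Z$; this locus is nonempty whenever $\dim Z>0$. At such points the analytic type of $\pi^*D$ can differ from that of $D$ and the Hodge ideals can change (already for $p=0$: a cyclic cover branched non-transversally through a singular point of $Z$ can lower the log canonical threshold). What one would actually need is that the branch divisors are transverse to a fixed log resolution of $(X,D)$ and that Hodge ideals pull back under covers branched in this controlled way. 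That statement is plausible, but it is a substantive result about base change for the filtered complex $C^{\bullet}_{g^{-\alpha}}(-\lceil f^*D\rceil)\otimes_{\mathscr{D}_Y}\mathscr{D}_{Y\to X}$, not a formal reduction to the \'etale case. Without it, you can neither verify that $(Y,\widetilde D)$ is reduced $(k-1)$-log-canonical nor produce the direct-summand splitting needed to descend the vanishing. There is also a second, independent problem for the $H^1$ statement: the extra hypothesis involves $\Omega_X^{n-k+p}$, and since $\Omega_Y^j\neq\pi^*\Omega_X^j$ for a ramified $\pi$, the vanishing hypothesis on $X$ does not imply the corresponding hypothesis on $Y$; the direct-summand relation goes the wrong way here, so applying the Musta\c t\u a--Popa theorem on $Y$ would only yield the $H^1$ conclusion under a hypothesis on $Y$, not the one stated.
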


The key points of our proof are as follows. Without the global assumption, there does not necessarily exist a global $\mathscr{D}_X$-module which is locally isomorphic to $\mathscr{O}_X(*Z)h^{-\alpha}$. However, fortunately, there exists a global sheaf
$$\mathcal{M}_k(D):=\frac{\omega_X(kZ)\otimes_{\mathscr{O}_X} I_k(D)}{\omega_X((k-1)Z)\otimes_{\mathscr{O}_X} I_{k-1}(D)}$$
which is locally isomorphic to $\omega_X\otimes\text{Gr}^F_k\big(\mathscr{O}_X(*Z)h^{-\alpha}\big)$ 
for any $k\in \mathbb{Z}$. And we introduce  a complex (see Definition \ref{def})
$$\text{Sp}_k(\mathcal{M}_{\bullet}(D))=[\mathcal{M}_k(D)\otimes\wedge^n T_X\rightarrow \mathcal{M}_{k+1}(D)\otimes\wedge^{n-1} T_X\rightarrow \cdots \mathcal{M}_{k+n}(D)\otimes\wedge^0 T_X]$$
placed in degrees $-n,\cdots, 0$, called the $k$-th Spencer complex of $\mathcal{M}_{\bullet}(D)$, which is locally isomorphic to the $k$-th graded piece of the de Rham complex of $\mathscr{O}_X(*Z)h^{-\alpha}:$$$[\text{Gr}^F_k\mathscr{O}_X(*Z)h^{-\alpha}\rightarrow \Omega_X^1\otimes \text{Gr}^F_{k+1}\mathscr{O}_X(*Z)h^{-\alpha}\rightarrow \cdots \rightarrow  \Omega_X^n\otimes \text{Gr}^F_{k+n}\mathscr{O}_X(*Z)h^{-\alpha}]$$
for any integer $k$.
Moreover, as an analogue to Saito's vanishing theorem for mixed Hodge modules (but without using it), a vanishing theorem for $\text{Sp}_k(\mathcal{M}_{\bullet}(D))$ is proved.  It plays a key role in our proof of vanishing for Hodge ideals of $\mathbb Q$-divisors.

\begin{thm}[=Theorem \ref{main}]\label{thm2}
Let $X$ be a smooth complex projective variety of dimension $n$ and $D$ an effective $\mathbb{Q}$-divisor. 

(1) If $\mathcal{A}$ is a line bundle on $X$ such that $\mathcal{A}-D$ is ample, then
$$\mathbf{H}^i(X,\text{Sp}_k(\mathcal{M}_{\bullet}(D))\otimes_{\mathscr{O}_X} \mathcal{A})=0$$
for any $i>0$ and any integer $k$.

(2) If there exists an ample effective divisor with support contained in $\text{Supp}(D)$, then (1) also holds for line bundle $\mathcal{A}$ such that $\mathcal{A}-D$ is nef.
\end{thm}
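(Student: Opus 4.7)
The plan is to pass to a log resolution $\pi\colon Y\to X$ of $(X,Z)$, realise $\text{Sp}_k(\mathcal{M}_\bullet(D))$ as the derived pushforward of a complex of logarithmic differentials on $Y$, and then apply the classical Esnault-Viehweg log-Kodaira-Akizuki-Nakano vanishing for SNC $\mathbb{Q}$-divisors. This avoids Saito's vanishing \eqref{yong} (whose application in \cite{MP19a} required the global assumption) and uses only Hodge theory of smooth projective varieties on $Y$.

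\emph{Logarithmic model on the resolution.} Let $\pi\colon Y\to X$ be a log resolution of $(X,Z)$ with SNC divisor $E=\pi^{-1}(Z)_{\text{red}}$; write $\pi^{*}D=\sum a_{i}E_{i}$ with $a_{i}\in\mathbb{Q}_{\geq 0}$. Combining the birational description of the Hodge ideals in \cite{MP19a} and \cite{MP18} with the local isomorphism $\mathcal{M}_{k}(D)\simeq\omega_{X}\otimes\text{Gr}^{F}_{k}(\mathscr{O}_{X}(*Z)h^{-\alpha})$ and the explicit shape of the graded Hodge filtration on $\mathscr{O}_{Y}(*E)h^{-\alpha}$ in SNC coordinates, one constructs a bounded complex $\widetilde{\text{Sp}}_{k}$ on $Y$ whose terms are direct sums of $\Omega_{Y}^{j}(\log E)\otimes\mathscr{L}_{j}$ (with $\mathscr{L}_{j}$ built from $\mathscr{O}_{Y}(-\lceil\pi^{*}D\rceil)$ together with the necessary exceptional twists) and such that $R\pi_{*}\widetilde{\text{Sp}}_{k}\simeq\text{Sp}_{k}(\mathcal{M}_\bullet(D))$.

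\emph{Vanishing on $Y$ and descent.} Since $\mathcal{A}-D$ is ample on $X$, the twisted sheaves $\mathscr{L}_{j}\otimes\pi^{*}\mathcal{A}$ fall in the range of the Esnault-Viehweg log-Kodaira-Akizuki-Nakano vanishing for SNC $\mathbb{Q}$-divisors, giving $H^{i}(Y,\Omega_{Y}^{j}(\log E)\otimes\mathscr{L}_{j}\otimes\pi^{*}\mathcal{A})=0$ for $i>0$ in the necessary degrees. A hypercohomology spectral sequence then yields $\mathbf{H}^{i}(Y,\widetilde{\text{Sp}}_{k}\otimes\pi^{*}\mathcal{A})=0$ for $i>0$, and combined with the Grauert-Riemenschneider-type higher-direct-image vanishing $R^{>0}\pi_{*}(\Omega_{Y}^{j}(\log E)\otimes\mathscr{L}_{j})=0$, the Leray spectral sequence transports this to $\mathbf{H}^{i}(X,\text{Sp}_{k}(\mathcal{M}_\bullet(D))\otimes\mathcal{A})=0$, proving (1). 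For part (2), let $D'$ be the ample effective divisor supported on $Z$; apply (1) to $\mathcal{A}\otimes\mathscr{O}_{X}(mD')$ (whose difference with $D$ is ample for $m\geq 1$) and descend to $\mathcal{A}$ via exact sequences coming from $\mathscr{O}_{X}(mD')/\mathscr{O}_{X}((m-1)D')$, whose quotients are supported on $D'\subset Z$, together with the fact that the pieces of $\text{Sp}_{k}(\mathcal{M}_\bullet(D))$ that contribute to higher cohomology are themselves concentrated along $Z$.

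\emph{Main obstacle.} The hardest step is the construction of $\widetilde{\text{Sp}}_{k}$ and the identification $R\pi_{*}\widetilde{\text{Sp}}_{k}\simeq\text{Sp}_{k}(\mathcal{M}_\bullet(D))$: this requires an explicit SNC-coordinate computation of the Hodge filtration on $\mathscr{O}_{X}(*Z)h^{-\alpha}$ on $Y$ together with a careful matching of the differentials of both complexes at every graded level, extending the corresponding calculation for Hodge ideals carried out in \cite{MP16} and \cite{MP19a} (where only the graded pieces, not their assembly into a complex via the Spencer differential, were needed).
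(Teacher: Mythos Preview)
Your overall strategy for (1)---pass to a log resolution and invoke Esnault--Viehweg log-Kodaira--Akizuki--Nakano vanishing---is exactly the paper's, but your execution is more complicated than needed and contains some confusion. The paper does not build a complex $\widetilde{\text{Sp}}_k$ on $Y$. Instead, Theorem~\ref{prop} shows that $\text{Sp}_{-k}(\mathcal{M}_\bullet(D))$ is quasi-isomorphic to $\mathbf{R}f_*$ of the \emph{single} sheaf $\Omega_Y^k(\log E)(-\lceil f^*D\rceil)$, shifted by $n-k$. This is proved via a double complex whose rows are the $G_k^\bullet(-\lceil f^*D\rceil)$ and whose columns are pullbacks of Koszul-type complexes $H_k^\bullet$ on $X$; one spectral sequence collapses to the single log sheaf, the other (via Proposition~\ref{pp}) to $\text{Sp}_{-k}(\mathcal{M}_\bullet(D))$. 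The only twist appearing is $\mathscr{O}_Y(-\lceil f^*D\rceil)$; no further ``exceptional twists'' are needed, so your $\mathscr{L}_j$ are overspecified. After this identification, part (1) reduces to the single vanishing $H^q\big(Y,\Omega_Y^p(\log E)\otimes f^*\mathcal{A}(-\lceil f^*D\rceil)\big)=0$ for $p+q>n$ (Theorem~\ref{aa}), with no hypercohomology spectral sequence on $Y$ required. Note also that once you have $R\pi_*\widetilde{\text{Sp}}_k\simeq\text{Sp}_k(\mathcal{M}_\bullet(D))$ in the derived category, the projection formula gives $\mathbf{H}^i(Y,\widetilde{\text{Sp}}_k\otimes\pi^*\mathcal{A})=\mathbf{H}^i(X,\text{Sp}_k(\mathcal{M}_\bullet(D))\otimes\mathcal{A})$ directly; your separate appeal to termwise $R^{>0}\pi_*$-vanishing is unnecessary, and as stated (``$R^{>0}\pi_*(\Omega_Y^j(\log E)\otimes\mathscr{L}_j)=0$'') is in fact false in general---the local vanishing of \cite[Corollary~C]{MP19a} only gives $R^p f_*\Omega_Y^q(\log E)(-\lceil f^*D\rceil)=0$ for $p+q>n$.

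Your argument for (2) has a genuine gap. Twisting by $\mathscr{O}_X(mD')$ and descending through the quotients $\mathscr{O}_X(mD')/\mathscr{O}_X((m-1)D')$ produces cohomology groups of sheaves supported on $D'$, and the claim that these vanish because ``the pieces of $\text{Sp}_k(\mathcal{M}_\bullet(D))$ that contribute to higher cohomology are themselves concentrated along $Z$'' is not a proof---support on $Z$ says nothing about vanishing of cohomology. The paper's argument is a one-line perturbation: if $H$ is the ample effective divisor with support in $Z$, replace $D$ by $D-\varepsilon H$ for $0<\varepsilon\ll 1$. Then $\mathcal{A}-(D-\varepsilon H)$ is ample, while $\lceil f^*(D-\varepsilon H)\rceil=\lceil f^*D\rceil$ and $\text{Supp}\,f^*(D-\varepsilon H)=E$, so part (1) applies verbatim with $D-\varepsilon H$ in place of $D$ and gives the identical conclusion.
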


In fact, it's a direct consequence of the following two theorems.

\begin{thm}[=Theorem \ref{prop}]\label{thm3}
Let $D$ be an effective $\mathbb{Q}$-divisor on a smooth complex variety $X$ of dimension $n$. Let $f:Y\rightarrow X$ be a log resolution of $(X,D)$  which is isomorphic over $X\setminus D$. Denote by $ E$ the support of $f^*D$. Then 
$$\mathbf{R}f_*\big(\Omega_Y^k(\log  E)(-\lceil f^*D\rceil)\big)[n-k]$$ is quasi-isomorphic to $\text{Sp}_{-k}(\mathcal{M}_{\bullet}(D))$ for any integer $k$.
\end{thm}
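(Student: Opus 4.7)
The plan is to deduce the global quasi-isomorphism from its local version, which follows from the Musta\c t\u a--Popa logarithmic description of the Hodge filtration on the local $\mathscr{D}_X$-module $\mathscr{O}_X(*Z)h^{-\alpha}$, together with a gluing argument that exploits the fact that both sides of the statement are defined globally on $X$.

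Both sides behave well under restriction to open subsets of $X$: the right-hand side because $\mathbf{R}f_*$ does, and the left-hand side because the sheaves $\mathcal{M}_j(D)=(\omega_X(jZ)\otimes I_j(D))/(\omega_X((j-1)Z)\otimes I_{j-1}(D))$ are built from globally defined sheaves. So one may work on a small enough open and write $D=\alpha\cdot\text{div}(h)$. Under the local identifications $\mathcal{M}_j(D)\cong\omega_X\otimes\text{Gr}^F_j(\mathscr{O}_X(*Z)h^{-\alpha})$ and $\omega_X\otimes\wedge^{n-p}T_X\cong\Omega_X^p$, a direct unwinding of Definition \ref{def} identifies $\text{Sp}_{-k}(\mathcal{M}_\bullet(D))$, term by term and differential by differential, with the graded piece
$$\text{Gr}^F_{-k}\text{DR}\big(\mathscr{O}_X(*Z)h^{-\alpha}\big)=\big[\text{Gr}^F_{-k}\mathscr{O}_X(*Z)h^{-\alpha}\to\Omega_X^1\otimes\text{Gr}^F_{-k+1}\to\cdots\to\Omega_X^n\otimes\text{Gr}^F_{-k+n}\big]$$
in degrees $-n,\dots,0$; both the Spencer and the de Rham differentials come from the action of derivations of $\mathscr{O}_X$, which on the local model acts by the Leibniz rule on $h^{-\alpha}$.

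The key input is then the local logarithmic comparison
$$\text{Gr}^F_{-k}\text{DR}\big(\mathscr{O}_X(*Z)h^{-\alpha}\big)\simeq\mathbf{R}f_*\big(\Omega_Y^k(\log E)(-\lceil f^*D\rceil)\big)[n-k]$$
from \cite{MP19a,MP18}, the $\mathbb{Q}$-divisor analogue of Saito's logarithmic description of the Hodge filtration on $j_*\mathbb{Q}_U^H[n]$, obtained by taking graded pieces of a filtered quasi-isomorphism between $\text{DR}(\mathscr{O}_X(*Z)h^{-\alpha})$ and the pushforward of a logarithmic de Rham complex on $Y$. Composing the Spencer/de Rham identification with this quasi-isomorphism yields the desired equivalence locally on $X$.

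The main obstacle is promoting the local quasi-isomorphism to a canonical global one: the intermediate complex $\text{Gr}^F_{-k}\text{DR}(\mathscr{O}_X(*Z)h^{-\alpha})$ depends on the choice of $(h,\alpha)$, while neither endpoint does. I would handle this by first constructing a canonical global morphism
$$\varphi\colon\mathbf{R}f_*\big(\Omega_Y^k(\log E)(-\lceil f^*D\rceil)\big)[n-k]\longrightarrow\text{Sp}_{-k}(\mathcal{M}_\bullet(D))$$
using only data defined globally on $X$---namely, the logarithmic residue/trace maps out of $\Omega_Y^{k+j}(\log E)(-\lceil f^*D\rceil)$ and the explicit presentation of $\mathcal{M}_j(D)$ as a subquotient of $\omega_X(jZ)\otimes I_j(D)$---and then verifying in local coordinates that $\varphi$ recovers the Musta\c t\u a--Popa quasi-isomorphism. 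Once this is done, being a quasi-isomorphism is a local property on $X$, and so $\varphi$ is a quasi-isomorphism globally.
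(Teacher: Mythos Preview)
Your proposal identifies the right difficulty but does not overcome it. You correctly observe that the obstacle is producing a \emph{global} morphism $\varphi$ between the two sides, and then you write that you ``would'' construct it ``using only data defined globally on $X$---namely, the logarithmic residue/trace maps \ldots\ and the explicit presentation of $\mathcal{M}_j(D)$.'' But you do not actually construct it, and this is the entire content of the theorem. There is no evident map from $\mathbf{R}f_*\Omega_Y^k(\log E)(-\lceil f^*D\rceil)$ to the individual terms $\mathcal{M}_{-k+j}(D)\otimes\wedge^{n-j}T_X$ built out of residue or trace maps alone; the Spencer complex mixes all the $\mathcal{M}_j$'s, and producing such a morphism really requires an auxiliary object interpolating between the two sides. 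Your ``key input''---the local quasi-isomorphism $\text{Gr}^F_{-k}\text{DR}(\mathscr{O}_X(*Z)h^{-\alpha})\simeq\mathbf{R}f_*(\Omega_Y^k(\log E)(-\lceil f^*D\rceil))[n-k]$---is also not stated as such in \cite{MP19a,MP18}; extracting a \emph{single} logarithmic form sheaf from the graded de Rham complex already requires the Koszul-type resolution that the paper invokes.

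The paper avoids gluing altogether by working with a globally defined double complex on $Y$:
\[
C^{i,j}=\Omega_Y^{n+i}(\log E)(-\lceil f^*D\rceil)\otimes f^*\bigl(S^{n-k+i+j}T_X\otimes\wedge^{-j}T_X\bigr),
\]
whose horizontal differential is the $\mathscr{O}_Y$-linear map defining $G_{-k+j}^\bullet$ and whose vertical differential is the standard Koszul map $S^pT_X\otimes\wedge^qT_X\to S^{p+1}T_X\otimes\wedge^{q-1}T_X$. One spectral sequence (columns $C^{i,\bullet}$) uses exactness of the Koszul complex $H_{-k+i}^\bullet$ (Lemma~\ref{l}) to collapse the total complex to $\Omega_Y^k(\log E)(-\lceil f^*D\rceil)[n-k]$. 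Applying $\mathbf{R}f_*$ and running the other spectral sequence (rows $C^{\bullet,j}$), Proposition~\ref{pp} identifies the $E_1$-page with exactly $\text{Sp}_{-k}(\mathcal{M}_\bullet(D))$, and degeneration is immediate. No local defining function $h$ enters at any point, so there is nothing to glue. This double-complex construction \emph{is} the canonical global $\varphi$ you are looking for; your outline is missing precisely this idea.
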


\begin{thm}[=Theorem \ref{aa}]\label{thm}
Let $X$ be a smooth complex projective variety of dimension $n$ and $D$ an effective $\mathbb{Q}$-divisor. Let $f:Y\rightarrow X$ be a log resolution of $(X,D)$  which is isomorphic over $X\setminus D$. Denote by $ E$ the support of $f^*D$. 

(1) If $\mathcal{A}$ is a line bundle on $X$ such that 
$\mathcal{A}-D$ is ample, then
$$H^q\big(Y,\Omega^p_Y(\log  E)\otimes_{\mathscr{O}_Y} f^*\mathcal{A}(-\lceil f^*D\rceil)\big)=0$$
for any $p+q>n$.

(2) If there exists an ample effective divisor on $X$ with support contained in $\text{Supp}(D)$, then (1) also holds for line bundle $\mathcal{A}$ such that $\mathcal{A}-D$ is nef.
\end{thm}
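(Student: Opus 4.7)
My strategy is to reduce the desired vanishing to a logarithmic Kawamata--Viehweg/Esnault--Viehweg type vanishing theorem for $\mathbb{Q}$-twisted logarithmic differentials on $Y$, and thereby avoid any need for $\ell$-th roots of line bundles on $X$.

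The first step is a cosmetic rewriting of the target. Write $f^*D = \sum_i a_i E_i$ with $a_i > 0$ and $E = \sum_i E_i$; since $\lceil a_i\rceil \geq 1$ for every $i$, we have $\lceil f^*D\rceil \geq E$. Setting $\mathcal{B} := f^*\mathcal{A}(E - \lceil f^*D\rceil)$, the sheaf of interest becomes
$$\Omega^p_Y(\log E) \otimes f^*\mathcal{A}(-\lceil f^*D\rceil) \;\cong\; \Omega^p_Y(\log E)(-E) \otimes \mathcal{B},$$
which is in the form to which Esnault--Viehweg vanishing naturally applies.

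The second step checks the positivity of $\mathcal{B}$. With $\Delta := \lceil f^*D\rceil - f^*D$, an effective $\mathbb{Q}$-divisor supported on $E$ with coefficients in $[0,1)$, one has
$$\mathcal{B} \sim_\mathbb{Q} f^*(\mathcal{A}-D) + (E - \Delta).$$
Under the hypothesis of (1), $\mathcal{A}-D$ is ample, so $f^*(\mathcal{A}-D)$ is nef and big on $Y$ (bigness from $(f^*(\mathcal{A}-D))^n = (\mathcal{A}-D)^n>0$). Choosing an integer $N$ that clears the denominators of $\Delta$, one obtains $\mathcal{B}^{\otimes N}\cong f^*(\mathcal{A}-D)^{\otimes N}\otimes \mathcal{O}_Y(N(E-\Delta))$.

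The third step applies the cyclic cover vanishing of Esnault--Viehweg (\emph{Lectures on Vanishing Theorems}, §6). The associated cyclic cover $\pi: Y'\to Y$ is constructed entirely on $Y$ from the globally defined line bundle $f^*(\mathcal{A}-D)\otimes \mathcal{B}^{-1}$ and its canonical $N$-th power identification with $\mathcal{O}_Y(N(E-\Delta))$; no $\ell$-th root of a line bundle on $X$ enters, so the global assumption of \cite{MP19a} is not required. Combining the eigenspace decomposition of $\pi_*$ under the Galois action with the classical Kodaira--Akizuki--Nakano vanishing on $Y'$ against the nef and big line bundle $\pi^*f^*(\mathcal{A}-D)$ then yields
$$H^q\bigl(Y,\Omega^p_Y(\log E)(-E) \otimes \mathcal{B}\bigr) = 0 \quad\text{for } p+q>n,$$
which is the desired vanishing in (1). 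Part (2) follows by a standard perturbation: let $H$ be an ample effective divisor with $\text{Supp}(H)\subseteq Z$; for small rational $\varepsilon >0$, the $\mathbb{Q}$-divisor $D':=D - \varepsilon H$ is effective, satisfies $\text{Supp}(D') = Z$ and $\lceil f^*D'\rceil=\lceil f^*D\rceil$ (the perturbation has arbitrarily small coefficients supported on $E$), and $\mathcal{A}-D' = (\mathcal{A}-D)+\varepsilon H$ is ample, so (1) applied to $(D',\mathcal{A})$ gives (2).

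The main technical subtlety is Step~3: the components $E_i$ with $a_i\in\mathbb{Z}$ contribute to $E-\Delta$ with coefficient exactly $1$, which is borderline for the strictly fractional boundary hypothesis of the cyclic cover. To handle them cleanly, I would absorb such components into the nef-and-big part by applying Kodaira's lemma to rewrite $f^*(\mathcal{A}-D) + E^{\natural}$, where $E^{\natural}:=\sum_{a_i\in\mathbb{Z}} E_i$, as (ample) $+$ (small effective fractional divisor), so that the boundary actually entering the cyclic cover is the genuinely fractional $E^{\flat}-\Delta$ with $E^{\flat}:=\sum_{a_i\notin\mathbb{Z}}E_i$.
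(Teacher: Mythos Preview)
Your reduction of (2) to (1) by replacing $D$ with $D-\varepsilon H$ is exactly the paper's argument. The gap is in Step~3 of your proof of (1): you invoke ``classical Kodaira--Akizuki--Nakano vanishing on $Y'$ against the nef and big line bundle $\pi^*f^*(\mathcal{A}-D)$'', but Nakano vanishing is false in general for nef and big line bundles (Ramanujam's example). Since $f$ contracts the exceptional locus, $f^*(\mathcal{A}-D)$ is never ample on $Y$, and passing to a cyclic cover does not change this. Your final-paragraph repair via Kodaira's lemma does not close the gap either: writing a big divisor as ample plus effective gives no control on the support of the effective part, so its components need not lie in $E$ and hence cannot be absorbed into the logarithmic boundary without altering $(Y,E)$; and if instead you subtract a small $f$-exceptional $\epsilon F$ to make $f^*(\mathcal{A}-D)-\epsilon F$ ample, the compensating $+\epsilon F$ pushes the coefficients of the components in $E^{\natural}$ strictly above $1$, which is precisely the regime in which the cyclic-cover argument breaks down.

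The paper replaces ampleness by an \emph{affine complement}. Choose $l\gg 0$ with $lD$ integral and $\mathcal{A}^{l}(-lD)$ very ample, and a general $H\in|\mathcal{A}^{l}(-lD)|$ with $f^*H+E$ simple normal crossing. Then $Y\setminus(f^*H\cup E)\cong X\setminus(H\cup Z)$ is affine. The honest line-bundle identity $f^*\mathcal{A}^{l}\cong\mathscr{O}_Y(l\cdot f^*D+f^*H)$ feeds into \cite[Theorem~3.2]{EV92} to produce an integrable logarithmic connection on $f^*\mathcal{A}(-\lceil f^*D\rceil-f^*H)$ along $E+f^*H$ with residues in $[0,1)$; $E_1$-degeneration together with Artin vanishing on the affine complement then gives
\[
H^q\bigl(Y,\Omega^p_Y(\log(E+f^*H))\otimes f^*\mathcal{A}(-\lceil f^*D\rceil-f^*H)\bigr)=0,\qquad p+q>n.
\]
The auxiliary divisor $f^*H$ is removed via the residue exact sequence
\[
0\to\Omega^p_Y(\log(E+f^*H))(-f^*H)\to\Omega^p_Y(\log E)\to\Omega^p_{f^*H}(\log E|_{f^*H})\to 0
\]
and induction on $\dim X$. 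This is the missing idea: instead of forcing a Nakano-type statement for a non-ample class, introduce a very ample auxiliary divisor on $X$ to make the complement on $Y$ affine, and trade Nakano vanishing for Artin vanishing.
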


Theorem \ref{thm} is a extension of Akizuki-Kodaira-Nakano vanishing theorem for log forms (see \cite{AN54} and \cite[Corollary 6.4]{EV92}), which says that given a simple normal crossing divisor $\Delta$ and an ample bundle $\mathcal{A}$ on a smooth variety $X$,
\begin{align}\label{no}
H^q(X,\Omega_X^p(\log \Delta)\otimes_{\mathscr{O}_X} \mathcal{A})=0
\end{align} 
for any $p+q>n$. It's showed in \cite[Theorem 32.2]{MP16} that (\ref{no}) also holds for semiample line bundles, provided that $X\setminus \Delta$ is affine.

In Section 2, we briefly review the definitions of Hodge ideals of $\mathbb Q$-divisors and how to describe Hodge ideals in terms of log resolutions.
In Section 3, we introduce the definition of $\mathcal{M}_k(D)$ and $\text{Sp}_k(\mathcal{M}_{\bullet}(D))$, and then we prove the  vanishing theorem for $\text{Sp}_k(\mathcal{M}_{\bullet}(D))$. Section 4 is dedicated to the proof of the vanishing theorem for Hodge ideals of $\mathbb Q$-divisors without the global assumption. 

At the end of this section we introduce some notations:

(1) If $\mathcal{A}$ and $\mathcal{B}$ are two $\mathscr{O}_X$-modules on a variety $X$, then $\mathcal{A}\otimes \mathcal{B}$ means the tensor product of $\mathcal{A}$ and $\mathcal{B}$ as $\mathscr{O}_X$-module unless otherwise stated (we omit the subscript $\mathscr{O}_X$).

(2) Let $D=\sum_i \alpha_i D_i$ be a $\mathbb{Q}$-divisor on a variety $X$,  then we denote 
$$\lceil D \rceil=\sum_i \lceil\alpha_i \rceil D_i \quad \text{and}\quad  \lfloor D \rfloor=\sum_i \lfloor\alpha_i \rfloor D_i$$
where $$\lceil\alpha_i \rceil =\min\{m\in \mathbb{Z}\mid m\geq \alpha_i\} \quad \text{ and }\quad \lfloor\alpha_i \rfloor =\max\{m\in \mathbb{Z}\mid m\leq \alpha_i\} $$
for all $i$.

(3) Let $\mathcal{E}$ be a vector bundle on a variety $X$, we denote by $S^k \mathcal{E}$ the $k$-th symmetric product of $\mathcal{E}$.

\section*{Acknowledgement}
I would like to express my sincere gratitude to Mihnea Popa for suggesting the problem and for his constant support of this project. I would like to thank Huaiqing Zuo for very helpful suggestions. I also thank Northwestern University for their warm hospitality during a visit when this paper is prepared. Lastly, I am greatly thankful to my advisor Stephen Shing-Toung Yau and Tsinghua Visiting Doctoral Students Foundation for providing financial support during my visit.

\section{Hodge ideal of $\mathbb{Q}$-divisor: birational approach}
In this section, we will review the definition of Hodge ideals of $\mathbb{Q}$-divisors and how to describe Hodge ideals by birational methods. For more details, please see Musta\c t\u a and Popa's paper \cite{MP19a}.

Let $X$ be a smooth complex variety of dimension $n$ and $D$ an effective $\mathbb{Q}$-divisor on $X$. Let $Z$ be the support of $D$.  Denote by $\mathscr{O}_X(*Z)$ the sheaf of rational functions on $X$ with poles along $Z$.

Locally, there exist a regular function $h$ and a rational number $\alpha$ such that $D=\alpha\cdot\text{div}(h)$. Consider the left $\mathscr{D}_X$-module $\mathscr{O}_X(*Z)h^{-\alpha}$, which is a rank 1 free $\mathscr{O}_X(*Z)$-module generated by the symbol $h^{-\alpha}$, and the action of a derivation $P$ of $\mathscr{O}_X$ is given by
$$P(wh^{-\alpha})=\left(P(w)-\alpha w\frac{P(h)}{h}\right)h^{-\alpha}.$$

There is a natural filtration on this $\mathscr{D}$-module, denoted by $F_{\bullet}\mathscr{O}_X(*Z)h^{-\alpha}$, which makes it a filtered direct summand of a filtered $\mathscr D_X$-module underlying a mixed Hodge module. Moreover, we can write 
$$F_k \big(\mathscr{O}_X(*Z)h^{-\alpha}\big)=I_k(D)\otimes \mathscr{O}_X(kZ)h^{-\alpha}$$
for an coherent ideal sheaf $I_k(D)$, which is called the $k$-th Hodge ideal of the $\mathbb Q$-divisor $D$. It's not hard to check that the definition of Hodge ideals is independent of the choice of $h$ and $\alpha$. Therefore, Hodge ideals can be defined globally on $X$ (see \cite[Section B.4]{MP19a}). Note that $F_k \big(\mathscr{O}_X(*Z)h^{-\alpha}\big)=0$ for $k<0$, which implies that $I_k(D)=0$ for $k<0$. 

The corresponding filtered right $\mathscr{D}_X$-module of $\mathscr{O}_X(*Z)h^{-\alpha}$ is $h^{-\alpha}\omega_X(*Z)$, on which the filtration is given by
$$F_{k-n} \big(h^{-\alpha}\omega_X(*Z)\big)=\omega_X\otimes F_k\big(\mathscr{O}_X(*Z)h^{-\alpha} \big).$$

$(h^{-\alpha}\omega_X(*Z),F_{\bullet})$ can be described in terms of log resolutions. Let $ f:Y \rightarrow X$ be a log resolution of the pair $(X,Z)$ which is isomorphic over $X\setminus Z$. Denote by $ E$ the support $ f^*Z$, then $ E$ is a reduced simple normal crossing divisor. Denote $g= f^*h$. It is showed in \cite[Section B.8]{MP19a} that there is a canonical filtered isomorphism 
\begin{align*}
\big(h^{-\alpha}\omega_X(*Z),F_{\bullet}\big)\simeq H^0f_+\big(g^{-\alpha}\omega_Y(* E),F_{\bullet}\big),
\end{align*}
and $$H^pf_+\big(g^{-\alpha}\omega_Y(* E),F_{\bullet}\big)=0 \quad \text{for } p\neq 0.$$
For the notation $f_+$ above, recall that for any proper morphism between smooth varieties $f:Y\rightarrow X$, there is a filtered direct image functor
$$f_+:\textbf{D}^b\big(\text{FM}(\mathscr{D}_Y)\big)\rightarrow \textbf{D}^b\big(\text{FM}(\mathscr{D}_Y)\big) $$
between the bounded derived categories of filtered $\mathscr{D}$-modules (See \cite{Sai88} and \cite[2.c]{Sai90}).
Moreover, Saito proves that this direct image functor is strict for filtered $\mathscr{D}_X$-modules underlying mixed Hodge modules. That is to say, the following diagram is commutative
\begin{align}\label{iso}
\xymatrix{
F_k\big(h^{-\alpha}\omega_X(*Z)\big)\ar@{^(->}[r] \ar[d]^{\simeq} \ar@{}[dr]|(.5){\circlearrowleft}&  h^{-\alpha}\omega_X(*Z)\ar[d]^{\simeq}\\
R^0f_*\big(F_k(g^{-\alpha}\omega_Y(* E)\overset{\mathbf{L}}{\otimes}_{\mathscr{D}_{Y}}\mathscr{D}_{Y\rightarrow X})\big) \ar@{^(->}[r] & R^0f_*(g^{-\alpha}\omega_Y(* E)\overset{\mathbf{L}}{\otimes}_{\mathscr{D}_{Y}}\mathscr{D}_{Y\rightarrow X})
 }
\end{align}
and  
\begin{equation}\label{iso2}
\begin{aligned}
&R^pf_*(g^{-\alpha}\omega_Y(* E)\overset{\mathbf{L}}{\otimes}_{\mathscr{D}_{Y}}\mathscr{D}_{Y\rightarrow X})\\
= &R^pf_*\big(F_k(g^{-\alpha}\omega_Y(* E)\overset{\mathbf{L}}{\otimes}_{\mathscr{D}_{Y}}\mathscr{D}_{Y\rightarrow X})\big)=0
\end{aligned}
\end{equation}
for any integer $k$ and any $p\neq 0$.
Here $\mathscr{D}_{Y\rightarrow X}=\mathscr{O}_Y\otimes_{f^{-1}\mathscr{O}_X}f^{-1}\mathscr{D}_X$ is the transfer $\mathscr{D}$-module.

Consider the following complex of locally free right $\mathscr{D}_Y$-modules
$$C^{\bullet}_{g^{-\alpha}}=[\mathscr{D}_Y\rightarrow \Omega_Y^1(\log  E)\otimes\mathscr{D}_Y\rightarrow \cdots \rightarrow \Omega_Y^n(\log  E)\otimes\mathscr{D}_Y]$$
placed in degrees $-n,\cdots,0$.
The differential $d$ is given by
$$d(\eta\otimes P)=d\eta\otimes P+\sum_i(dy_i\wedge \eta)\otimes \partial_{y_i}P-\alpha\frac{dg}{g} \wedge \eta \otimes P,$$
here $y_1,\cdots,y_n$ are local coordinates of $Y$.
In fact, $C^{\bullet}_{g^{-\alpha}}$ is a filtered complex, where
$$F_{k}C^{\bullet}_{g^{-\alpha}}=\Omega_Y^{\bullet+n}(\log  E)\otimes F_{k+n+\bullet}\mathscr{D}_Y.$$
It's not hard to check that the differential $C^i_{g^{-\alpha}}\rightarrow C^{i+1}_{g^{-\alpha}}$ induces a map 
\begin{align*}
C^i_{g^{-\alpha}}(-\lceil f^*D\rceil):=\mathscr{O}_Y(-\lceil f^*D\rceil)\otimes C^i_{g^{-\alpha}} \\
\rightarrow C^{i+1}_{g^{-\alpha}}(-\lceil f^*D\rceil):=\mathscr{O}_Y(-\lceil f^*D\rceil)\otimes C^{i+1}_{g^{-\alpha}}.
\end{align*}
We thus obtain a filtered subcomplex $(C^{\bullet}_{g^{-\alpha}}(-\lceil f^*D\rceil),F_{\bullet})$ of $(C^{\bullet}_{g^{-\alpha}},F_{\bullet})$. 
Note that this is not obtained by just tensoring $C^{\bullet}_{g^{-\alpha}}$ with $\mathscr{O}_Y(-\lceil f^*D\rceil)$, since $C^{\bullet}_{g^{-\alpha}}$ is not a complex of $\mathscr{O}_Y$-modules. 

There is a natural morphism 
\begin{align*}
C^0_{g^{-\alpha}}(-\lceil f^*D\rceil)\otimes \mathscr{D}_X=\omega_Y(E-\lceil f^*D\rceil)\otimes\mathscr{D}_X &\longrightarrow g^{-\alpha}\omega_Y(* E)\\
w\otimes Q &\longmapsto (g^{-\alpha}w)\cdot Q
\end{align*}
\cite[Proposition 6.1]{MP19a} tells us that this morphism make the filtered complex $(C^{\bullet}_{g^{-\alpha}}(-\lceil f^*D\rceil),F_{\bullet})$ a filtered resolution of $(g^{-\alpha}\omega_Y(* E),F_{\bullet})$. Using (\ref{iso}), (\ref{iso2}) and this result, Musta\c t\u a and Popa prove the following theorem:

\begin{thm}\cite[Theorem 8.1]{MP19a}
With the above notation, the followings hold:

i) For every $p\neq 0$ and every $k\in \mathbb{Z}$, we have 
$$R^pf_*(C^{\bullet}_{g^{-\alpha}}(-\lceil f^*D\rceil)\otimes_{\mathscr{D}_Y}\mathscr{D}_{Y \rightarrow X})=0$$
and
$$R^pf_*F_k(C^{\bullet}_{g^{-\alpha}}(-\lceil f^*D\rceil)\otimes_{\mathscr{D}_Y}\mathscr{D}_{Y \rightarrow X})=0,$$

ii) For every $k\in \mathbb{Z}$, the natural inclusion induces an injective map
$$R^0f_*F_k(C^{\bullet}_{g^{-\alpha}}(-\lceil f^*D\rceil)\otimes_{\mathscr{D}_Y}\mathscr{D}_{Y \rightarrow X}) \hookrightarrow R^0f_*(C^{\bullet}_{g^{-\alpha}}(-\lceil f^*D\rceil)\otimes_{\mathscr{D}_Y}\mathscr{D}_{Y \rightarrow X})$$

iii) We have a canonical isomorphism 
$$R^0f_*(C^{\bullet}_{g^{-\alpha}}(-\lceil f^*D\rceil)\otimes_{\mathscr{D}_Y}\mathscr{D}_{Y \rightarrow X})\simeq h^{-\alpha}\omega_X(*Z)$$
that induces for every $k\in \mathbb{Z}$ an isomorphism
\begin{align*}
R^0f_*F_{k-n}(C^{\bullet}_{g^{-\alpha}}(-\lceil f^*D\rceil)\otimes_{\mathscr{D}_Y}\mathscr{D}_{Y \rightarrow X})&\simeq F_{k-n} \big(h^{-\alpha}\omega_X(*Z)\big)\\
&\simeq h^{-\alpha}\omega_X(kZ)\otimes I_k(D).
\end{align*}
\end{thm}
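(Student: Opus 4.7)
The plan is to deduce all three parts from the filtered quasi-isomorphism
$$(C^{\bullet}_{g^{-\alpha}}(-\lceil f^*D\rceil),F_{\bullet}) \longrightarrow (g^{-\alpha}\omega_Y(*E),F_{\bullet})$$
supplied by Proposition 6.1 of \cite{MP19a}, combined with the strictness statements (\ref{iso}) and (\ref{iso2}) for Saito's filtered direct image functor $f_+$. The idea is to transport each assertion from the right-hand side, where it encodes the mixed Hodge module content, to the concrete complex on the left, which is computable in terms of log differentials and the transfer $\mathscr{D}$-module.

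First I would verify the necessary flatness: each term $C^i_{g^{-\alpha}}(-\lceil f^*D\rceil)$ is a locally free right $\mathscr{D}_Y$-module, and each filtered piece $F_k C^i_{g^{-\alpha}}(-\lceil f^*D\rceil) = \Omega^{i+n}_Y(\log E)(-\lceil f^*D\rceil)\otimes F_{k+n+i}\mathscr{D}_Y$ is a locally free $\mathscr{O}_Y$-module of finite rank. Consequently, tensoring with $\mathscr{D}_{Y\to X}$ over $\mathscr{D}_Y$ already computes the derived tensor, and Proposition 6.1 of \cite{MP19a} upgrades to a filtered quasi-isomorphism
$$\big(C^{\bullet}_{g^{-\alpha}}(-\lceil f^*D\rceil)\otimes_{\mathscr{D}_Y}\mathscr{D}_{Y\to X},F_{\bullet}\big) \simeq \big(g^{-\alpha}\omega_Y(*E)\overset{\mathbf{L}}{\otimes}_{\mathscr{D}_Y}\mathscr{D}_{Y\to X},F_{\bullet}\big).$$
Applying $Rf_*$ to both sides and comparing with (\ref{iso2}) immediately yields the vanishings in (i), for both the total complex and its filtered pieces.

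For part (ii) and the first isomorphism in (iii), I would take $R^0 f_*$ of the same filtered quasi-isomorphism and invoke the commutative square (\ref{iso}): its right vertical arrow identifies $R^0 f_*(C^{\bullet}_{g^{-\alpha}}(-\lceil f^*D\rceil)\otimes_{\mathscr{D}_Y}\mathscr{D}_{Y\to X})$ with $h^{-\alpha}\omega_X(*Z)$, and under this identification the inclusion of the filtered piece becomes $F_k(h^{-\alpha}\omega_X(*Z)) \hookrightarrow h^{-\alpha}\omega_X(*Z)$, which is manifestly injective, proving (ii). The remaining isomorphism in (iii) is then pure bookkeeping via the right-to-left $\mathscr{D}$-module correspondence together with the definition of Hodge ideals:
$$F_{k-n}(h^{-\alpha}\omega_X(*Z)) = \omega_X \otimes F_k(\mathscr{O}_X(*Z)h^{-\alpha}) = \omega_X \otimes I_k(D) \otimes \mathscr{O}_X(kZ) h^{-\alpha} = h^{-\alpha}\omega_X(kZ) \otimes I_k(D).$$

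The deep input doing all the real work is the strictness of Saito's filtered direct image for mixed Hodge modules, which is packaged into (\ref{iso}) and (\ref{iso2}); once accepted, the main technical point to be careful about is that Proposition 6.1 of \cite{MP19a} provides a resolution whose $F_k$-pieces are not only locally free as right $F_{\bullet}\mathscr{D}_Y$-modules but also locally free as $\mathscr{O}_Y$-modules, so that the naive tensor with $\mathscr{D}_{Y\to X}$ computes the derived version at every filtered level, not merely on the total complex. This filtered flatness bookkeeping is the only step that could cause trouble, and it is essentially forced by the explicit description of $F_k C^{\bullet}_{g^{-\alpha}}(-\lceil f^*D\rceil)$ recorded above.
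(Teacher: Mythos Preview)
Your proposal is correct and follows precisely the approach indicated in the paper: the sentence immediately preceding the theorem explains that Musta\c t\u a and Popa prove it ``using (\ref{iso}), (\ref{iso2}) and this result'' (i.e., Proposition~6.1 of \cite{MP19a}), which is exactly the combination you invoke. The only additional care you take---checking that each $F_k C^i_{g^{-\alpha}}(-\lceil f^*D\rceil)$ is locally free over $\mathscr{O}_Y$ so that the naive tensor with $\mathscr{D}_{Y\to X}$ agrees with the derived one at every filtered level---is the standard bookkeeping needed to make the sketch rigorous, and is implicit in the paper's citation of \cite[Theorem~8.1]{MP19a}.
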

Denote
$$\text{Gr}^F_k(C^{\bullet}_{g^{-\alpha}}(-\lceil f^*D\rceil)\otimes_{\mathscr{D}_Y}\mathscr{D}_{Y \rightarrow X})=\frac{F_k(C^{\bullet}_{g^{-\alpha}}(-\lceil f^*D\rceil)\otimes_{\mathscr{D}_Y}\mathscr{D}_{Y \rightarrow X})}{F_{k-1}(C^{\bullet}_{g^{-\alpha}}(-\lceil f^*D\rceil)\otimes_{\mathscr{D}_Y}\mathscr{D}_{Y \rightarrow X})}$$
for any integer $k$.
The following corollary is a direct consequence of the above theorem.
\begin{cor}\label{cor1}
i) For every $p \neq 0$ and every $k\in \mathbb{Z}$, we have
$$R^pf_*\text{Gr}^F_k(C^{\bullet}_{g^{-\alpha}}(-\lceil f^*D\rceil)\otimes_{\mathscr{D}_Y}\mathscr{D}_{Y \rightarrow X})=0$$
ii) For any $k \in \mathbb{Z}$, there is a canonical isomorphism
\begin{align*}
R^0f_*\text{Gr}^F_{k-n}(C^{\bullet}_{g^{-\alpha}}(-\lceil f^*D\rceil)\otimes_{\mathscr{D}_Y}\mathscr{D}_{Y \rightarrow X})&\simeq \text{Gr}^F_{k-n}\big(h^{-\alpha}\omega_X(*Z)\big)\\
&\simeq \omega_X \otimes\text{Gr}_k^F(\mathscr{O}_X(*Z)h^{-\alpha}).
\end{align*}
\end{cor}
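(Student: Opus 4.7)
The strategy is to derive the corollary as a formal consequence of Theorem 8.1 by running the long exact sequence associated to the short exact sequence of filtered complexes
\begin{equation*}
0 \longrightarrow F_{k-1}\bigl(C^{\bullet}_{g^{-\alpha}}(-\lceil f^*D\rceil)\otimes_{\mathscr{D}_Y}\mathscr{D}_{Y \rightarrow X}\bigr) \longrightarrow F_{k}\bigl(C^{\bullet}_{g^{-\alpha}}(-\lceil f^*D\rceil)\otimes_{\mathscr{D}_Y}\mathscr{D}_{Y \rightarrow X}\bigr) \longrightarrow \mathrm{Gr}^F_{k}(\cdots) \longrightarrow 0
\end{equation*}
and reading off hypercohomology. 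To keep notation manageable, let me abbreviate $K_k := F_{k}\bigl(C^{\bullet}_{g^{-\alpha}}(-\lceil f^*D\rceil)\otimes_{\mathscr{D}_Y}\mathscr{D}_{Y \rightarrow X}\bigr)$ and $G_k := \mathrm{Gr}^F_{k}(\cdots)$.

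For part (i), take any $p \neq 0$. The long exact sequence of higher direct images gives
\begin{equation*}
\cdots \to R^{p}f_* K_{k-1} \to R^{p}f_* K_{k} \to R^{p}f_* G_{k} \to R^{p+1}f_* K_{k-1} \to R^{p+1}f_* K_{k} \to \cdots
\end{equation*}
If $p \geq 1$ or $p \leq -2$, then both $R^{p}f_* K_{k-1} = R^{p}f_* K_{k} = 0$ by Theorem 8.1 (i), and the boundary term $R^{p+1}f_* K_{k-1}$ also vanishes (since $p+1 \neq 0$ in these cases), forcing $R^{p}f_* G_k = 0$. The only remaining case is $p = -1$: then $R^{-1}f_* K_{k-1} = R^{-1}f_* K_{k} = 0$ by (i) of the theorem, so the vanishing of $R^{-1}f_* G_k$ reduces to the injectivity of the map $R^{0}f_* K_{k-1} \to R^{0}f_* K_{k}$. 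This injectivity follows from Theorem 8.1 (ii): both $R^{0}f_* K_{k-1}$ and $R^{0}f_* K_{k}$ inject into $R^{0}f_*\bigl(C^{\bullet}_{g^{-\alpha}}(-\lceil f^*D\rceil)\otimes_{\mathscr{D}_Y}\mathscr{D}_{Y \rightarrow X}\bigr)$ via maps that factor through each other, so the inclusion between them is injective.

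For part (ii), the vanishing of $R^{-1}f_* G_k$ and of $R^{1}f_* K_{k-1}$ that we just established truncates the long exact sequence into a short exact sequence
\begin{equation*}
0 \to R^{0}f_* K_{k-1} \to R^{0}f_* K_{k} \to R^{0}f_* G_{k} \to 0.
\end{equation*}
Specializing to filtration index $k-n$ and invoking the isomorphisms in Theorem 8.1 (iii), this sequence is canonically identified with
\begin{equation*}
0 \to F_{k-n-1}\bigl(h^{-\alpha}\omega_X(*Z)\bigr) \to F_{k-n}\bigl(h^{-\alpha}\omega_X(*Z)\bigr) \to R^{0}f_* G_{k-n} \to 0,
\end{equation*}
so $R^{0}f_* G_{k-n} \simeq \mathrm{Gr}^F_{k-n}\bigl(h^{-\alpha}\omega_X(*Z)\bigr)$. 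The second isomorphism of part (ii) is then immediate from the convention $F_{k-n}\bigl(h^{-\alpha}\omega_X(*Z)\bigr) = \omega_X \otimes F_{k}\bigl(\mathscr{O}_X(*Z)h^{-\alpha}\bigr)$ relating the left and right $\mathscr{D}_X$-module filtrations, which commutes with passing to the graded pieces.

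There is essentially no obstacle here: the entire argument is a mechanical diagram chase once Theorem 8.1 is in hand. The only subtle point worth checking carefully is the injectivity input needed to kill $R^{-1}f_* G_k$; one has to verify that the injection asserted in Theorem 8.1 (ii) for consecutive filtration steps is compatible, but this is automatic from the strictness of the filtered direct image that underlies the whole setup.
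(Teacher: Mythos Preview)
Your proof is correct and is precisely the standard long-exact-sequence argument that the paper implicitly has in mind when it states the corollary as ``a direct consequence of the above theorem'' without further elaboration. The injectivity step for $p=-1$ is the only place requiring a moment's thought, and you handle it correctly by factoring through the injection of Theorem~8.1(ii).
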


By definition, $\text{Gr}_k^F(C^{\bullet}_{g^{-\alpha}}\otimes_{\mathscr{D}_Y} \mathscr{D}_{Y\rightarrow X})$ is the complex 
\begin{equation}\label{eq1}
\begin{aligned}
0 \rightarrow \mathscr{O}_Y\otimes  f^*S^{k}T_X \rightarrow \Omega_Y^1(\log  E)&\otimes f^*S^{k+1}T_X \rightarrow 
\cdots \\
&\rightarrow \Omega_Y^n(\log  E)\otimes f^*S^{k+n}T_X\rightarrow 0
\end{aligned}
\end{equation}
placed in degrees $-n,\cdots,0$, where $S^iT_X$ means the $i$-th symmetric power of $T_X$. And the differential is given by
\begin{equation}\label{eq2}
d(\eta\otimes f^*P)=\sum_i(dy_i\wedge \eta)\otimes \partial_{y_i}(f^*P),
\end{equation}
where $y_1,\cdots,y_n$ are local coordinates on $Y$.
Since the differential is $\mathscr{O}_Y$-linear, $\text{Gr}_k^F(C^{\bullet}_{g^{-\alpha}}\otimes_{\mathscr{D}_Y} \mathscr{D}_{Y\rightarrow X})$ is a complex of $\mathscr{O}_Y$-modules. Therefore, the complex 
$$\text{Gr}_k^F(C^{\bullet}_{g^{-\alpha}}(-\lceil f^*D\rceil)\otimes_{\mathscr{D}_Y} \mathscr{D}_{Y\rightarrow X})$$
 is obtained by tensoring $\text{Gr}_k^F(C^{\bullet}_{g^{-\alpha}}\otimes_{\mathscr{D}_Y} \mathscr{D}_{Y\rightarrow X})$ with $\mathscr{O}_Y(-\lceil f^*D\rceil)$.

Since $h^{-\alpha}\omega_X(*Z)$ is a filtered right $\mathscr{D}_X$-module, $\bigoplus_k \text{Gr}_k^F(h^{-\alpha}\omega_X(*Z))$ has a right $\bigoplus_k S^kT_X$-module structure, which induces a morphism
\begin{equation}\label{map}
\begin{aligned}
d:\text{Gr}_k^F(h^{-\alpha}\omega_X(*Z))\otimes  \wedge^q T_X&\longrightarrow \text{Gr}_{k+1}^F(h^{-\alpha}\omega_X(*Z))\otimes  \wedge^{q-1} T_X\\
w\otimes\theta_1\wedge\cdots \wedge\theta_q &\longmapsto  \sum_{i} (-1)^{i-1} w\cdot \theta_i\otimes (\theta_1\wedge\cdots\hat{\theta_i}\cdots \wedge \theta_q)
\end{aligned}
\end{equation}
for any integer $q$ and $k$. It's easy to check that $d\circ d=0$, hence we obtain a complex
\begin{align*}
0\rightarrow \text{Gr}_{k-n}^F(h^{-\alpha}\omega_X(*Z))\otimes  \wedge^n T_X\rightarrow \text{Gr}_{k-n+1}^F(h^{-\alpha}\omega_X(*Z))\otimes  \wedge^{n-1} T_X \\ \cdots \rightarrow \text{Gr}_{k}^F(h^{-\alpha}\omega_X(*Z))\otimes  \wedge^0 T_X\rightarrow 0
\end{align*}
which is the $k$-th graded piece of the Spencer complex of $h^{-\alpha}\omega_X(*Z)$, denoted by $\text{Gr}_k^F\text{Sp}(h^{-\alpha}\omega_X(*Z))$. Note that this is isomorphic to the $k$-th graded piece of the de Rham complex of $\mathscr{O}_X(*Z)h^{-\alpha}$
\begin{align*}
0\rightarrow \text{Gr}^F_k(\mathscr{O}_X(*Z)h^{-\alpha})\rightarrow \Omega_X^1\otimes &\text{Gr}^F_{k+1}(\mathscr{O}_X(*Z)h^{-\alpha})\rightarrow \\
& \cdots \rightarrow  \Omega_X^n\otimes \text{Gr}^F_{k+n}(\mathscr{O}_X(*Z)h^{-\alpha})\rightarrow 0.
\end{align*}
Since 
\begin{align*}
h^{-\alpha}\omega_X(*Z)&\simeq H^0 f_+(g^{-\alpha}\omega_Y(* E))\\
&\simeq R^0f_*(C^{\bullet}_{g^{-\alpha}}(-\lceil f^*D\rceil)\otimes_{\mathscr{D}_Y}\mathscr{D}_{Y \rightarrow X}),
\end{align*}
the right $\mathscr{D}_X$-module structure of $h^{-\alpha}\omega_X(*Z)$ is induced by the right $f^{-1}\mathscr{D}_X$-module structure of $C^{\bullet}_{g^{-\alpha}}(-\lceil f^*D\rceil)\otimes_{\mathscr{D}_Y}\mathscr{D}_{Y \rightarrow X}$. Therefore, the right $\bigoplus_k S^kT_X$-module structure of $\bigoplus_k \text{Gr}_k^F(h^{-\alpha}\omega_X(*Z))$ is induced by the right $f^{-1}(\bigoplus_k S^kT_X)$-module structure of $\bigoplus_k \text{Gr}^F_{k-n}(C^{\bullet}_{g^{-\alpha}}(-\lceil f^*D\rceil)\otimes_{\mathscr{D}_Y}\mathscr{D}_{Y \rightarrow X})$. That is to say, the morphism (\ref{map}) is obtained by applying the functor $R^0f_*$ to the morphism between the follwing two complexes

\begin{align*}
\text{Gr}^F_{k}(C^{\bullet}_{g^{-\alpha}}(-\lceil f^*D\rceil)\otimes_{\mathscr{D}_Y}\mathscr{D}_{Y \rightarrow X})\otimes f^*(\wedge^q T_X):&\\
[\mathscr{O}_Y(-\lceil  f^*D\rceil)\otimes  f^*(S^{k}T_X\otimes \wedge^q T_X) \rightarrow \\ 
\cdots  \rightarrow \Omega_Y^n(\log  E)&(-\lceil  f^*D\rceil)\otimes f^*(S^{k+n}T_X\otimes \wedge^{q} T_X)]
\end{align*}
and 
\begin{align*}
\text{Gr}^F_{k+1}(C^{\bullet}_{g^{-\alpha}}(-\lceil f^*D\rceil)\otimes_{\mathscr{D}_Y}\mathscr{D}_{Y \rightarrow X})\otimes f^*(\wedge^{q-1} T_X):&\\
[\mathscr{O}_Y(-\lceil  f^*D\rceil)\otimes  f^*(S^{k+1}T_X\otimes \wedge^{q-1} T_X) \rightarrow \\ \cdots 
\rightarrow \Omega_Y^n(\log  E)(-\lceil  f^*D\rceil)\otimes &f^*(S^{k+n+1}T_X\otimes \wedge^{q-1} T_X)]
\end{align*}
which is induced by the natural morphism
\begin{align*}
S^p T_X\otimes \wedge^q T_X& \longrightarrow S^{p+1} T_X \otimes\wedge^{q-1} T_X \\
P\otimes(\theta_1\wedge\cdots \wedge\theta_q)&\longmapsto\sum_{i} (-1)^{i-1} P\cdot \theta_i\otimes (\theta_1\wedge\cdots\hat{\theta_i}\cdots \wedge \theta_q).
\end{align*}

\section{GLobal approach}
In the previous section, using the local defining function $h$ of $1/\alpha\cdot D$, we introduce the $\mathscr{D}_X$-module $h^{-\alpha}\omega_{X}(*Z)$ and  the complex $C^{\bullet}_{g^{-\alpha}}$ to calculate Hodge ideals. However, since $h$ is not a global function, both $\mathscr{O}_{X}(*Z)h^{-\alpha}$ and $C^{\bullet}_{g^{-\alpha}}$ can not be defined globally.

In this section, we will treat the problem globally. Let $X$ be a smooth complex variety of dimensional $n$ and $D$ an effective $\mathbb{Q}$-divisor on $X$. Denote by $Z$ the support of $D$. Fix a positive rational number $\alpha$ such that $1/\alpha\cdot D$ is an  integral divisor. Let $ f:Y \rightarrow X$ be a log resolution of the pair $(X,Z)$ which is isomorphic over $X\setminus Z$. Denote by $ E$ the support of $ f^*Z$.

\begin{defn}\label{def1} Define a complex on $Y$
\begin{equation}\label{dG}
\begin{aligned}
G_k^{\bullet}:=
&[\mathscr{O}_Y\otimes  f^*S^{k}T_X\rightarrow \Omega_Y^1(\log  E)\otimes f^*S^{k+1}T_X\rightarrow \\
&\cdots \rightarrow \Omega_Y^n(\log  E)\otimes f^*S^{k+n}T_X].
\end{aligned}
\end{equation}
placed in degrees $-n,\cdots,0$.
The differential is given by
\begin{align}\label{G}
d(\eta\otimes P)=\sum_i(dy_i\wedge \eta)\otimes \partial_{y_i}P
\end{align}
where $y_1,\cdots,y_n$ are local coordinates of $Y$.
\end{defn}

The differential of $G_k^{\bullet}$ is $\mathscr{O}_Y$-linear, that is to say, $G_k^{\bullet}$ is a complex of $\mathscr{O}_Y$-modules. Denote  the subcomplex  $G_k^{\bullet}\otimes\mathscr{O}_Y(-\lceil f^*D \rceil)$ of $G_k^{\bullet}$ by $G_k^{\bullet}(-\lceil f^*D \rceil)$. Define
$$\mathcal{M}_k(D):=\frac{\omega_X(kZ)\otimes I_k(D)}{\omega_X((k-1)Z)\otimes I_{k-1}(D)}.$$
for any integer $k$. If $D=\alpha \cdot\text{div}(h)$ for some regular function $h$, then
$$\mathcal{M}_k(D)\simeq \text{Gr}_{k-n}^F(h^{-\alpha}\omega_X(*Z))\simeq \omega_X\otimes\text{Gr}^F_k\big(\mathcal{O}_X(*Z)h^{-\alpha}\big)$$
as $\mathscr{O}_X$-modules.
\begin{prop}\label{pp}
For any $k\in \mathbb{Z}$,

i) $R^p f_*(G_k^{\bullet}(-\lceil f^*D\rceil))=0$ for any $p\neq 0$;

ii) $R^0 f_*(G_k^{\bullet}(-\lceil f^*D\rceil)) \simeq \mathcal{M}_{k+n}(D)$.
\end{prop}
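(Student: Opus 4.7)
The plan is to observe that, locally on $X$, the complex $G_k^{\bullet}(-\lceil f^*D\rceil)$ coincides with the graded complex studied in Section 2, so that Corollary \ref{cor1} yields both assertions after a trivial reindexing; the remaining work is to verify that the resulting local identifications in (ii) are canonical and hence patch to a global isomorphism.

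First, I would cover $X$ by Zariski opens $U$ on which $D$ admits a defining datum $D|_U = \alpha \cdot \text{div}(h)$ with $h \in \Gamma(U, \mathscr{O}_X)$ and $\alpha \in \mathbb{Q}_{>0}$. Comparing (\ref{dG})--(\ref{G}) with (\ref{eq1})--(\ref{eq2})---as already observed at the end of Section 2---gives a canonical isomorphism of complexes
$$G_k^{\bullet}(-\lceil f^*D\rceil) \;\simeq\; \text{Gr}_k^F\bigl(C_{g^{-\alpha}}^{\bullet}(-\lceil f^*D\rceil) \otimes_{\mathscr{D}_Y} \mathscr{D}_{Y \to X}\bigr)$$
on $f^{-1}(U)$. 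Part (i) is then immediate from Corollary \ref{cor1}(i), since the vanishing of $R^p f_*$ is Zariski-local on $X$. For (ii), Corollary \ref{cor1}(ii) with index $k+n$ in place of $k$ yields a local isomorphism
$$R^0 f_*\bigl(G_k^{\bullet}(-\lceil f^*D\rceil)\bigr) \;\simeq\; \omega_X \otimes \text{Gr}_{k+n}^F\bigl(\mathscr{O}_X(*Z) h^{-\alpha}\bigr).$$
Using $F_m(\mathscr{O}_X(*Z)h^{-\alpha}) = I_m(D) \otimes \mathscr{O}_X(mZ) h^{-\alpha}$ and killing the trivializing symbol $h^{-\alpha}$, the right-hand side is identified with $\mathcal{M}_{k+n}(D)$ by its very definition.

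The main obstacle will be to verify that these local isomorphisms in (ii) glue to a well-defined morphism over $X$. Both sides are already intrinsically defined on $X$: the source because $G_k^{\bullet}$ is built from globally defined sheaves on $Y$, and the target because the Hodge ideals $I_m(D)$ are independent of the local defining datum. It therefore suffices to show that the local identification is independent, up to canonical isomorphism, of the choice of $(h,\alpha)$. For a second datum $(h',\alpha')$ with $\alpha \cdot \text{div}(h) = \alpha' \cdot \text{div}(h')$, the transition isomorphism between the $\mathscr{D}_X$-modules $\mathscr{O}_X(*Z)h^{-\alpha}$ and $\mathscr{O}_X(*Z)(h')^{-\alpha'}$ constructed in \cite[\S B.4]{MP19a} is filtered and sends $h^{-\alpha}$ to a unit times $(h')^{-\alpha'}$, so the induced map on $\text{Gr}_{k+n}^F$ becomes the identity once the trivializing symbols are killed. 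Hence the identification with $\mathcal{M}_{k+n}(D)$ is canonical and patches to a global isomorphism on $X$, completing the proof of (ii).
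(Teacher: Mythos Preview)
Your overall strategy is the same as the paper's: identify $G_k^{\bullet}(-\lceil f^*D\rceil)$ locally with $\text{Gr}^F_k\bigl(C^{\bullet}_{g^{-\alpha}}(-\lceil f^*D\rceil)\otimes_{\mathscr{D}_Y}\mathscr{D}_{Y\to X}\bigr)$, invoke Corollary~\ref{cor1} for both parts, and then verify that the local isomorphisms in (ii) glue. Part (i) and the local computation in (ii) are fine.

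The gluing step, however, has a genuine gap. Your assertion that ``the induced map on $\text{Gr}^F_{k+n}$ becomes the identity once the trivializing symbols are killed'' is not correct: if the transition sends $h^{-\alpha}$ to $u\cdot(h')^{-\alpha'}$ for a unit $u$, then after killing symbols the induced self-map of $\mathcal{M}_{k+n}(D)$ is multiplication by $u$, not the identity. More seriously, you only discuss the transition on the target side. The composite you are trying to pin down is
\[
\psi_h:\; R^0f_*\bigl(G_k^{\bullet}(-\lceil f^*D\rceil)\bigr)\;\xrightarrow{\text{Cor.~\ref{cor1}(ii)}}\;\omega_X\otimes\text{Gr}^F_{k+n}\bigl(\mathscr{O}_X(*Z)h^{-\alpha}\bigr)\;\xrightarrow{\text{kill }h^{-\alpha}}\;\mathcal{M}_{k+n}(D),
\]
and the first arrow also depends on $h$: it comes from the augmentation $C^0_{g^{-\alpha}}(-\lceil f^*D\rceil)\to g^{-\alpha}\omega_Y(*E)$, $w\otimes Q\mapsto (g^{-\alpha}w)\cdot Q$, which manifestly involves $g=f^*h$. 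You say nothing about why this part is compatible with a change of defining datum.

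The paper's proof resolves this by an explicit computation. Fixing $\alpha=i/l$ and writing $h_1=u^l h_2$, it lifts $\widetilde{\phi}:g_1^{-\alpha}\omega_Y(*E)\to g_2^{-\alpha}\omega_Y(*E)$ to a map of resolutions and computes that the induced map $\widetilde{\varphi}$ on $R^0f_*(G_k^{\bullet}(-\lceil f^*D\rceil))$ is multiplication by $u^{-i}$; likewise the induced map $\varphi$ on $\mathcal{M}_{k+n}(D)$ is multiplication by $u^{-i}$. The commutativity $\varphi\circ\psi_1=\psi_2\circ\widetilde{\varphi}$ then reads $u^{-i}\psi_1(s)=\psi_2(u^{-i}s)$, whence $\psi_1=\psi_2$. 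The point is that the two nontrivial unit factors cancel, not that either one is trivial. Your argument needs to supply this computation (or an equivalent one) rather than asserting the transition is the identity.
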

\begin{proof}
For short we denote $f^*D$ by $G$. Locally, we may suppose that there exists a regular function $h$ such that $1/\alpha\cdot D=\text{div}(h)$. Denote $g=f^*h$. Comparing Definition \ref{def1} with (\ref{eq1}) and (\ref{eq2}) we see that locally
$$G_k^{\bullet}(-\lceil G\rceil)\simeq\text{Gr}_k^F(C^{\bullet}_{g^{-\alpha}}(-\lceil G\rceil)\otimes_{\mathscr{D}_Y} \mathscr{D}_{Y\rightarrow X}).$$
Then Corollary \ref{cor1}(i) implies the first assertion in the proposition.

For $p=0$, by Corollary \ref{cor1}(ii) we see that locally
\begin{align*}
R^0f_*(G_k^{\bullet}(-\lceil G\rceil))
&\simeq \text{Gr}_k^F(h^{-\alpha}\omega_X(*Z))\\
&\simeq \mathcal{M}_{k+n}(D).
\end{align*}
In order to obtain a global isomorphism, it suffices to the check that the above local isomorphism is independent of the choice of the local defining function $h$ of $1/\alpha \cdot D$. Suppose that  $1/\alpha\cdot D=\text{div}(h_1)=\text{div}(h_2)$ for two regular functions $h_1$ and $h_2$ on a open subset $U$ of $X$. Write $\alpha=i/l$ for two positive integers $i$ and $l$. Shrinking the open set $U$, we may assume that there exists an invertible regular function $u$ on $U$ such that $h_1=u^lh_2$. Denote $g_1=f^*h_1$ and $g_2=f^*h_2$. Then there are two natural isomorphisms
\begin{align*}
\phi:h_1^{-\alpha}\omega_X(*Z)&\longrightarrow h_2^{-\alpha}\omega_X(*Z)\\
h_1^{-\alpha}w&\longmapsto h_2^{-\alpha}u^{-i}w
\end{align*}
and 
\begin{align*}
\widetilde{\phi}: g_1^{-\alpha}\omega_Y(* E)&\longrightarrow g_2^{-\alpha}\omega_Y(* E)\\
g_1^{-\alpha}w&\longmapsto g_2^{-\alpha}(f^*u)^{-i}w
\end{align*}
such that the following diagram is commutative 
$$
\xymatrix{
    f_+\left(g_1^{-\alpha}\omega_Y(* E)\right)\ar@{}[dr]|(.5){\circlearrowleft}\ar[d]^{\simeq}\ar[r]^{f_+\widetilde{\phi}} & f_+\left(g_2^{-\alpha}\omega_Y(* E)\right) \ar[d]^{\simeq}\\
 h_1^{-\alpha}\omega_X(*Z) \ar[r]^{\phi}& h_2^{-\alpha}\omega_X(*Z).
}
$$
Therefore we have the following commutative diagram
$$
\xymatrix{
R^0f_*\big(\text{Gr}_k^F(C^{\bullet}_{g_1^{-\alpha}}(-\lceil G\rceil)\otimes_{\mathscr{D}_Y} \mathscr{D}_{Y\rightarrow X})\big)\ar@{}[dr]|(.5){\circlearrowleft}\ar[d]^{\simeq}\ar[r] & R^0f_*\big(\text{Gr}_k^F(C^{\bullet}_{g_2^{-\alpha}}(-\lceil G\rceil)\otimes_{\mathscr{D}_Y} \mathscr{D}_{Y\rightarrow X})\big) \ar[d]^{\simeq}\\
\text{Gr}_k^F(h_1^{-\alpha}\omega_X(*Z))  \ar[r] & \text{Gr}_k^F(h_2^{-\alpha}\omega_X(*Z))\\
}
$$
It induces
$$
\xymatrix{
R^0f_*(G_{k}^{\bullet}(-\lceil G\rceil))\ar@{}[dr]|(.5){\circlearrowleft}\ar[d]_{\psi_1}^{\simeq}\ar[r]^{\widetilde{\varphi}} & R^0f_*(G_{k}^{\bullet}(-\lceil G\rceil)) \ar[d]_{\psi_2}^{\simeq}\\
\mathcal{M}_{k+n}(D)\ar[r]^{\varphi}& \mathcal{M}_{k+n}(D).
}
$$
where both $\widetilde{\varphi}$ and $\varphi$ are given by multiplication by $u^{-i}$. It suffices to check that $\psi_1=\psi_2$. For any local section $s$ of $R^0f_*(G_{k}^{\bullet}(-\lceil G\rceil))$, denote $\psi_1(s)$ by $t$. Then
$$\varphi\circ \psi_1(s)=\varphi(t)=u^{-i}t.$$
On the other hand, 
$$\psi_2\circ \widetilde{\varphi}(s)=\psi_2(u^{-i}s).$$
Hence $\psi_2(u^{-i}s)=u^{-i}t$, which implies that $\psi_2(s)=t$. Therefore $\psi_1=\psi_2$. 

\end{proof}

For any integers $p\geq 1$ and $q\geq 0$, there is a natural morphism of $\mathscr{O}_X$-modules
\begin{align}\label{diff}
d: S^p T_X\otimes \wedge^q T_X  \longrightarrow S^{p+1} T_X \otimes\wedge^{q-1} T_X 
\end{align}
which is given by
$$d(P\otimes\theta_1\wedge\cdots \wedge\theta_q)=\sum_{i} (-1)^{i-1} P\cdot \theta_i\otimes (\theta_1\wedge\cdots\hat{\theta_i}\cdots \wedge \theta_q).$$
It's easy to check that 
\begin{align}\label{use}
d\circ d=0.
\end{align}
 So we obtain a complex
\begin{equation}\label{dH}
\begin{aligned}
H^{\bullet}_k:\quad 0\rightarrow  S^kT_X \otimes \wedge^nT_X &\stackrel{d}{\longrightarrow} S^{k+1}T_X \otimes \wedge^{n-1}T_X\rightarrow \cdots \\ 
&\stackrel{d}{\longrightarrow} S^{k+n-1}T_X \otimes \wedge^1T_X 
\stackrel{d}{\longrightarrow} S^{k+n}T_X\rightarrow 0.
\end{aligned}
\end{equation}
placed in degrees $-n,\cdots,0$. Note that $S^iT_X=0$ for $i<0$. Then
 $$H^{\bullet}_k=0\quad \text{for } k<-n$$
 and $$H^{\bullet}_{-n}=S^0T_X=\mathscr{O}_X.$$
For $k>-n$, we have the following classical result.

\begin{lem}\cite[Example B.2.1]{Laz04} \label{l}
For any $k\geq -n+1$, the complex $H^{\bullet}_k$ is exact.
\end{lem}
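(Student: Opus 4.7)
The statement is a standard Koszul exactness result, and since $T_X$ is locally free and the differential (\ref{diff}) is $\mathscr{O}_X$-linear, the plan is to reduce to a trivializing chart where $T_X|_U\simeq\mathscr{O}_U^{\oplus n}$ with basis $e_1,\dots,e_n$, and then produce an explicit contracting homotopy. On such a chart the complex $H^\bullet_k$ becomes the classical Koszul-type complex associated with a free module $V$ of rank $n$ over a commutative $\mathbb{Q}$-algebra.

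Concretely, I would define
$$h\colon S^pV\otimes\wedge^q V\longrightarrow S^{p-1}V\otimes\wedge^{q+1}V,\qquad h(s\otimes\theta)=\sum_{i=1}^n \frac{\partial s}{\partial e_i}\otimes e_i\wedge\theta,$$
and then verify the key identity
$$(d\circ h+h\circ d)\big|_{S^pV\otimes\wedge^qV}=(p+q)\cdot \mathrm{id}.$$
The proof of this identity combines two ingredients: the Leibniz rule for $\partial/\partial e_i$ applied to products $s\cdot\theta_l$, and the Euler relation $\sum_i e_i\cdot(\partial s/\partial e_i)=p\cdot s$ valid on $S^pV$. Since every term of $H^\bullet_k$ has $p+q$ equal to the constant $k+n$, the hypothesis $k\geq -n+1$ makes this integer positive and hence invertible over $\mathscr{O}_X$, so $(k+n)^{-1}h$ is a contracting homotopy and the complex is exact.

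The main piece of bookkeeping is the verification that $dh+hd=(p+q)\,\mathrm{id}$. On a simple tensor $s\otimes\theta_1\wedge\dots\wedge\theta_q$, expanding both compositions produces three kinds of terms: a pure Euler term $p\cdot s\otimes\theta$ from $d\circ h$, a wedge-reordering term $q\cdot s\otimes\theta$ coming from the Kronecker $\delta$ portion of the Leibniz rule in $h\circ d$, and cross terms of the shape $(\partial_i s)\cdot\theta_l\otimes e_i\wedge(\cdots)$ which I expect to cancel in pairs between $d\circ h$ and $h\circ d$ by an elementary sign count. The boundary positions of the complex cause no trouble: $h$ vanishes on the leftmost term $S^kV\otimes\wedge^nV$ because $\wedge^{n+1}V=0$, and $d$ vanishes on the rightmost term $S^{k+n}V$ because $\wedge^{-1}V=0$, so the homotopy identity persists at the ends.
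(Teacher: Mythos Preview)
Your argument is correct. The contracting homotopy you describe is the standard one, and your verification of the identity $dh+hd=(p+q)\cdot\mathrm{id}$ on $S^pV\otimes\wedge^qV$ is accurate: the Euler term contributes $p$, the Kronecker-$\delta$ term from Leibniz contributes $q$, and the cross terms cancel with the sign count $(-1)^{l-1}+(-1)^{l}=0$. Since every term of $H^\bullet_k$ has $p+q=k+n\geq 1$ under the hypothesis, the complex is null-homotopic over $\mathscr{O}_X$ (which contains $\mathbb{Q}$), hence exact.

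As for comparison with the paper: the paper does not supply its own proof of this lemma but simply cites it as \cite[Example B.2.1]{Laz04}, treating it as a classical fact about Koszul complexes of locally free sheaves. Your explicit homotopy is precisely the kind of argument Lazarsfeld's reference records, so there is no genuine divergence here---you have just written out what the citation points to.
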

It's easy to check the following diagram is commutative for any $i,p,q$
$$
\xymatrix{
\Omega_Y^i(\log  E)\otimes f^*(S^{p}T_X\otimes \wedge^qT_X)\ar@{}[rd]|(0.5){\circlearrowright}\ar[r]\ar[d] &\Omega_Y^{i+1}(\log  E)\otimes f^*(S^{p+1}T_X\otimes \wedge^qT_X)\ar[d]\\
\Omega_Y^{i+1}(\log  E)\otimes f^*(S^{p+1}T_X\otimes \wedge^{q-1}T_X)\ar[r]&\Omega_Y^i(\log  E)\otimes f^*(S^{p+2}T_X\otimes \wedge^{q-1}T_X)}
$$
where the vertical morphisms are induced by (\ref{diff}) and the horizontal morphisms are induced by the differential of $G_k^{\bullet}$ (see the expression (\ref{G})).
Therefore the morphism (\ref{diff}) induces a morphism between complexes of $\mathscr{O}_Y$-modules
\begin{align*}
G_k^{\bullet}(-\lceil f^*D\rceil)\otimes f^*(\wedge^qT_X) : \quad &[\mathscr{O}_Y(-\lceil f^*D\rceil)\otimes  f^*(S^{k}T_X\otimes \wedge^qT_X)\rightarrow\cdots\\
& \rightarrow \Omega_Y^n(\log  E)(-\lceil f^*D\rceil)\otimes f^*(S^{k+n}T_X\otimes \wedge^qT_X)]
\end{align*}
and  
\begin{align*}
G_{k+1}^{\bullet}(-\lceil f^*D\rceil)\otimes f^*(\wedge^{q-1}T_X): \quad [\mathscr{O}_Y(-\lceil f^*D\rceil)\otimes  f^*(S^{k+1}T_X\otimes \wedge^{q-1}T_X)\rightarrow\cdots &\\
 \rightarrow \Omega_Y^n(\log  E)(-\lceil f^*D\rceil)\otimes f^*(S^{k+n+1}T_X\otimes \wedge^{q-1}T_X)]&.
\end{align*}
Applying the functor $R^0f_*$ to the above morphism, by Proposition \ref{pp}(ii) we obtain a morphism 
$$\widetilde{d}: \mathcal{M}_{k+n}(D)\otimes \wedge^q T_X \rightarrow \mathcal{M}_{k+n+1}(D)\otimes \wedge^{q-1}T_X.$$
From (\ref{use}) we see that $\widetilde{d}\circ \widetilde{d}=0$.

\begin{defn}\label{def}
Define $\text{Sp}_k(\mathcal{M}_{\bullet}(D))$ to be the following complex of $\mathscr{O}_X$-modules
$$[\mathcal{M}_k(D)\otimes\wedge^n T_X\stackrel{\widetilde{d}}{\longrightarrow} \mathcal{M}_{k+1}(D)\otimes\wedge^{n-1} T_X\stackrel{\widetilde{d}}{\longrightarrow} \cdots \stackrel{\widetilde{d}}{\longrightarrow}\mathcal{M}_{k+n}(D)\otimes\wedge^0 T_X]$$
placed in degrees $-n,-n+1,\cdots,0$. 
\end{defn}
\begin{rem}\label{rem}
Locally we can suppose that there exists a regular function $h$ such that $D=\alpha \cdot \text{div}(h)$. Then by definition, the complex $\text{Sp}_k(\mathcal{M}_{\bullet}(D))$ is locally isomorphic to $\text{Gr}^F_k\text{Sp}(h^{-\alpha}\omega_X(*Z))$. Since the definition of  $\text{Gr}^F_k\text{Sp}(h^{-\alpha}\omega_X(*Z))$ is intrinsic, we see that $\text{Sp}_k(\mathcal{M}_{\bullet}(D))$ is also independent of the choice of log resolutions.
 
\end{rem}

\begin{thm}[=Theorem \ref{thm3}]\label{prop}
For any integer $k$, we have 
$$\mathbf{R}f_*\big(\Omega_Y^k(\log  E)(-\lceil f^*D\rceil)\big)[n-k]$$
is quasi-isomorphic to $\text{Sp}_{-k}(\mathcal{M}_{\bullet}(D)).$
\end{thm}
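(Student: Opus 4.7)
The plan is to introduce a single double complex $\mathcal{K}^{\bullet,\bullet}$ on $Y$ that, after applying $\mathbf{R}f_*$, computes $\mathbf{R}f_*\bigl(\Omega_Y^k(\log E)(-\lceil f^*D\rceil)\bigr)[n-k]$ via one filtration and $\text{Sp}_{-k}(\mathcal{M}_\bullet(D))$ via the other. Concretely, for $p,q\in\{-n,\ldots,0\}$ I would set
\[\mathcal{K}^{p,q}:=\Omega_Y^{p+n}(\log E)(-\lceil f^*D\rceil)\otimes f^*\big(S^{p-k+q+n}T_X\otimes\wedge^{-q}T_X\big),\]
with the horizontal ($p$-direction) differential borrowed from $G_{q-k}^{\bullet}(-\lceil f^*D\rceil)\otimes f^*\wedge^{-q}T_X$ in Definition \ref{def1}, and the vertical ($q$-direction) differential induced by the Koszul-type map (\ref{diff}) on the second $f^*$-factor. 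The commutative square displayed just before Definition \ref{def} shows that (after the usual sign twist) $\mathcal{K}^{\bullet,\bullet}$ is a genuine double complex.

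The first step is to analyze the columns. For fixed $p$, the column $\mathcal{K}^{p,\bullet}$ is $\Omega_Y^{p+n}(\log E)(-\lceil f^*D\rceil)\otimes f^*H^{\bullet}_{p-k}$, with $H^{\bullet}_{\bullet}$ the Koszul complex (\ref{dH}). By Lemma \ref{l} this column is acyclic whenever $p\geq k-n+1$, it vanishes identically for $p<k-n$ (since $S^iT_X=0$ for $i<0$ kills every entry in the allowed range $q\in\{-n,\ldots,0\}$), and for $p=k-n$ it collapses to the single sheaf $\Omega_Y^k(\log E)(-\lceil f^*D\rceil)$ concentrated at $q=0$. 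The column-filtration spectral sequence for $\text{Tot}(\mathcal{K}^{\bullet,\bullet})$ therefore degenerates at $E_1$, giving a quasi-isomorphism
\[\text{Tot}(\mathcal{K}^{\bullet,\bullet})\simeq \Omega_Y^k(\log E)(-\lceil f^*D\rceil)[n-k]\]
in $\mathbf{D}(Y)$, and hence an identification of $\mathbf{R}f_*\text{Tot}(\mathcal{K}^{\bullet,\bullet})$ with the left-hand side of the theorem.

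For the second step I would analyze the rows. Each row is $\mathcal{K}^{\bullet,q}=G_{q-k}^{\bullet}(-\lceil f^*D\rceil)\otimes f^*\wedge^{-q}T_X$, so Proposition \ref{pp} combined with the projection formula (applied to the locally free sheaf $\wedge^{-q}T_X$ on $X$) shows that $\mathbf{R}f_*\mathcal{K}^{\bullet,q}$ is concentrated in degree $0$ with value $\mathcal{M}_{q-k+n}(D)\otimes\wedge^{-q}T_X$. Using the standard fact that $\mathbf{R}f_*$ of a double complex whose rows have pushforward concentrated in a single cohomological degree is quasi-isomorphic to the complex formed by those concentrated values (with the differentials induced by the vertical map) yields
\[\mathbf{R}f_*\text{Tot}(\mathcal{K}^{\bullet,\bullet})\simeq\big[\mathcal{M}_{-k}(D)\otimes\wedge^nT_X\to\cdots\to\mathcal{M}_{-k+n}(D)\otimes\wedge^0T_X\big].\]
By the construction of $\widetilde{d}$ in the paragraph preceding Definition \ref{def}, these induced differentials are exactly the $\widetilde{d}$ of the Spencer complex, so the right-hand side is $\text{Sp}_{-k}(\mathcal{M}_\bullet(D))$. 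Comparing with the first step proves the theorem.

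The main technical hurdle I anticipate is making rigorous the statement that $\mathbf{R}f_*$ of a double complex with row-concentrated derived pushforward is quasi-isomorphic on the nose to the complex of concentrated values. The standard remedy is to replace $\mathcal{K}^{\bullet,\bullet}$ by a Cartan-Eilenberg resolution and apply $f_*$ termwise; the row-concentration of $\mathbf{R}f_*$ then forces the resulting triple complex to degenerate onto a single complex realizing the Spencer complex at the chain level. A secondary bookkeeping issue is to verify that the pushed-down vertical differential agrees with $\widetilde{d}$ globally (not merely locally); this is essentially the content of the last paragraph of Section 2, and the local-to-global passage is exactly the independence-of-$h$ argument already carried out in the proof of Proposition \ref{pp}.
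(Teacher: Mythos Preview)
Your proposal is correct and follows essentially the same approach as the paper: the double complex $\mathcal{K}^{p,q}$ you introduce is exactly the paper's $C^{i,j}$ (with $(p,q)=(i,j)$), and your two-step analysis of columns via Lemma~\ref{l} and rows via Proposition~\ref{pp} mirrors the paper's argument. The only cosmetic difference is that where you invoke the abstract principle about $\mathbf{R}f_*$ of a double complex with row-concentrated pushforward, the paper phrases this directly as the degeneration at $E_2$ of the spectral sequence $E_1^{i,j}=R^if_*(C^{\bullet,j})\Rightarrow R^{i+j}f_*(C^{\bullet})$, which also dispatches your anticipated ``technical hurdle'' in one line.
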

\begin{proof}
Consider the following double complex
$$C^{i,j}=\Omega_Y^{n+i}(\log  E)(-\lceil f^*D\rceil)\otimes f^*(S^{n-k+i+j}T_X\otimes \wedge^{-j} T_X)$$ 
where the horizontal differential is induced by (\ref{G}) and the vertical differential is induced by (\ref{diff}). That is to say,
\begin{align}\label{e1}
C^{\bullet,j}=G_{-k+j}^{\bullet}(-\lceil f^*D\rceil)\otimes f^*(\wedge^{-j}T_X)
\end{align}
and 
\begin{align}\label{e2}
C^{i,\bullet}=\Omega_Y^{n+i}(\log  E)(-\lceil f^*D\rceil)\otimes f^*(H^{\bullet}_{-k+i}).
\end{align}
For the expression of $G_{-k+j}^{\bullet}$ (resp. $H^{\bullet}_{-k+i})$ please see (\ref{dG}) (resp.(\ref{dH})).
By Lemma \ref{l}, we know that $C^{i,\bullet}$ is acyclic for $i>k-n$.
It's easy to see that 
$$C^{i,\bullet}=0 \quad \text{for }i<k-n$$
and  $$C^{k-n,\bullet}=\Omega_Y^{k}(\log  E)(-\lceil f^*D\rceil).$$ 
As a result, the single complex associated with the above double complex (denoted by $C^{\bullet}$) is quasi-isomorphic to $\Omega_Y^{k}(\log  E)(-\lceil f^*D\rceil)[n-k]$.  It suffices to prove that $\mathbf{R}f_*C^{\bullet}$ is quasi-isomorphic to $\text{Sp}_{-k}(\mathcal{M}_{\bullet}(D))$.

There is a spectral sequence
$$E_1^{i,j}=R^i f_*(C^{\bullet,j})\Longrightarrow R^{i+j} f_*(C^{\bullet}).$$
By (\ref{e1}) and Proposition \ref{pp}, we have
$$E_1^{0,j}\simeq \mathcal{M}_{n-k+j}(D)\otimes \wedge^{-j}T_X$$ 
and 
$$E_1^{i,j}=0 \quad \text{for } i\neq 0.$$ Therefore, the spectral sequence degenerates at $E_2$. By definition, the following diagram is commutative,
$$
\xymatrix{
E_1^{0,j}\ar[d]_{\simeq}\ar@{}[rd]|(.5){\circlearrowleft}\ar[r]^{d_1^{0,j}}& E_1^{0,j+1}\ar[d]_{\simeq}\\ \mathcal{M}_{n-k+j}(D)\otimes \wedge^{-j}T_X\ar[r]^{\widetilde{d}}& \mathcal{M}_{n-k+j+1}(D)\otimes \wedge^{-j-1}T_X
}
$$
where $\widetilde{d}$ is the differential of $\text{Sp}_{-k}(\mathcal{M}_{\bullet}(D))$.
It follows that 
$$E_2^{0,j}\simeq H^j(\text{Sp}_{-k}(\mathcal{M}_{\bullet}(D)) \quad \text{for any } j.$$
As a result, $\mathbf{R}f_*C^{\bullet}$ is quasi-isomorphic to $\text{Sp}_{-k}(\mathcal{M}_{\bullet}(D))$.
\end{proof}
\begin{rem}
Theorem \ref{prop} is consistent with some known local Nakano-type vanishing results for $\mathbb{Q}$-divisors. 

(1) For any $p+q>n$,
$$R^pf_*\big(\Omega_Y^q(\log  E)(-\lceil f^*D\rceil)\big)=0.$$ 
This result is first obtained by Saito \cite[Corollary 3]{Sai07} when $D$ is reduced and the general result is obtained in \cite[Corollary C]{MP19a}. 

(2) If $D=\alpha \cdot\text{div}(h)$ for some global regular function $h$ and some rational number $\alpha$,  then 
$$R^pf_*\big(\Omega_Y^{n-p}(\log  E)(-\lceil f^*D\rceil)\big)=0 \quad \text{for all } p>k$$
if and only if the Hodge filtration on $\mathscr{O}_X(*Z)h^{-\alpha}$ is generated at level $k$, that is to say, the natural morphism $$F_1\mathscr{D}_X\cdot F_p\mathscr{O}_X(*Z)h^{-\alpha}\rightarrow F_{p+1}\mathscr{O}_X(*Z)h^{-\alpha}$$ is surjective for any $p\geq k$. This result is obtained in \cite[Theorem 17.1]{MP16} when $D$ is reduced and the general statement is given in \cite[Theorem 10.1]{MP19a}. 

Moreover, it's proved in \cite[Theorem E]{MP19b} that if $\text{div}(h)$ is reduced and $0<\alpha\leq 1$, then the Hodge filtration on $\mathscr{O}_X(*Z)h^{-\alpha}$ is generated at level $n-\lceil \widetilde{\alpha}_{h}+\alpha\rceil$, where $\widetilde{\alpha}_{h}$ is the negative of the largest root of the reduced Bernstein-Sato polynomial $\widetilde{b}_h(s)=b_h(s)/(s+1)$. As a result, 
\begin{thm}\cite[Theorem F]{MP19b}\label{3.10}
If $h$ defines a reduced divisor and $\alpha$ is a rational number in $(0,1]$, then 
$$R^pf_*\big(\Omega_Y^{n-p}(\log  E)(-\lceil f^*D\rceil)\big)=0 \quad \text{for all } p>n-\lceil \widetilde{\alpha}_{h}+\alpha\rceil.$$
\end{thm}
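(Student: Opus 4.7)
My plan is to combine the two ingredients already announced in the remark preceding the statement: a translation of the vanishing $R^pf_*\bigl(\Omega_Y^{n-p}(\log E)(-\lceil f^*D\rceil)\bigr)=0$ into a generation statement for the Hodge filtration on $\mathscr{O}_X(*Z)h^{-\alpha}$, and the explicit bound on the level of generation in terms of the reduced Bernstein-Sato polynomial $\widetilde b_h(s)$. The first ingredient is internal to the present paper and I describe it below; the second, Theorem~E of \cite{MP19b}, I would take as a black box.

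First, apply Theorem~\ref{prop} with the role of $k$ played by $n-p$. Since $\text{Sp}_{p-n}\bigl(\mathcal{M}_\bullet(D)\bigr)$ is placed in degrees $-n,\dots,0$, comparing $H^0$ on both sides of the quasi-isomorphism
\[
\mathbf{R}f_*\bigl(\Omega_Y^{n-p}(\log E)(-\lceil f^*D\rceil)\bigr)[p]\;\simeq\;\text{Sp}_{p-n}\bigl(\mathcal{M}_\bullet(D)\bigr)
\]
identifies
\[
R^pf_*\bigl(\Omega_Y^{n-p}(\log E)(-\lceil f^*D\rceil)\bigr)\;\simeq\;\operatorname{coker}\bigl(\mathcal{M}_{p-1}(D)\otimes T_X\stackrel{\widetilde d}{\longrightarrow}\mathcal{M}_p(D)\bigr).
\]
Next, working locally with $D=\alpha\cdot\text{div}(h)$, Proposition~\ref{pp} together with Remark~\ref{rem} (unwinding the differential through~\eqref{map}) identify this map, up to the global twist by $\omega_X$, with the action of vector fields
\[
T_X\otimes \text{Gr}^F_{p-1}\bigl(\mathscr{O}_X(*Z)h^{-\alpha}\bigr)\longrightarrow \text{Gr}^F_p\bigl(\mathscr{O}_X(*Z)h^{-\alpha}\bigr).
\]
Hence the simultaneous vanishing of $R^pf_*\bigl(\Omega_Y^{n-p}(\log E)(-\lceil f^*D\rceil)\bigr)$ for all $p>k$ is equivalent to the surjectivity of $T_X\otimes \text{Gr}^F_q\to\text{Gr}^F_{q+1}$ for every $q\geq k$, i.e.\ to the Hodge filtration on $\mathscr{O}_X(*Z)h^{-\alpha}$ being generated at level $k$ in the sense recalled after~\eqref{no}.

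Finally, Theorem~E of \cite{MP19b} asserts that under the hypotheses ``$\text{div}(h)$ reduced and $\alpha\in(0,1]$'' the Hodge filtration on $\mathscr{O}_X(*Z)h^{-\alpha}$ is generated at level $k=n-\lceil\widetilde\alpha_h+\alpha\rceil$; combined with the preceding paragraph this yields the asserted vanishing. The main obstacle is therefore not internal to the present paper but lies in this input from \cite{MP19b}, whose proof requires genuinely new techniques based on $V$-filtrations and the reduced Bernstein-Sato polynomial; by contrast the reduction sketched above is a direct consequence of Theorem~\ref{prop}, Proposition~\ref{pp} and the local comparison with $\text{Gr}^F_k\text{Sp}(h^{-\alpha}\omega_X(*Z))$ recorded in Remark~\ref{rem}.
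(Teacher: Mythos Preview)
Your proposal is correct and follows essentially the same route as the paper. The paper does not prove Theorem~\ref{3.10} in detail: it is quoted as \cite[Theorem~F]{MP19b} and is presented as the immediate consequence of (i) the equivalence between the vanishing $R^pf_*\bigl(\Omega_Y^{n-p}(\log E)(-\lceil f^*D\rceil)\bigr)=0$ for all $p>k$ and generation of the Hodge filtration at level~$k$ (point~(2) of the surrounding remark, attributed to \cite[Theorem~17.1]{MP16} and \cite[Theorem~10.1]{MP19a}), and (ii) \cite[Theorem~E]{MP19b}. Your argument supplies exactly these two ingredients, with the added bonus that you rederive the equivalence~(i) directly from the paper's own Theorem~\ref{prop} rather than quoting the external references---which is precisely the ``consistency'' the remark is pointing out.
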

\begin{rem}
As is mentioned in \cite[Remark 5.5]{MP19b}, the same statements holds if $\text{div}(h)$ is not reduced but $\lceil D \rceil=Z$. The condition $\lceil D \rceil=Z$ is needed just  because \cite[Theorem $A$]{MP18} requires it. However, in fact, in the proof of \cite[Theorem $A$]{MP18}, this condition has not been used. Therefore, the assumption that $\text{div}(h)$ is reduced can be removed in Theorem \ref{3.10}.
\end{rem}
\end{rem}

Now we can prove the vanishing theorem for $\text{Sp}_k(\mathcal{M}_{\bullet}(D))$.
\begin{thm}[=Theorem \ref{thm2}]\label{main}
Let $X$ be a smooth complex projective variety of dimension $n$ and $D$ an effective $\mathbb{Q}$-divisor. 

(1) If $\mathcal{A}$ is a line bundle on $X$ such that $\mathcal{A}-D$ is ample, then
$$\mathbf{H}^i(X,\text{Sp}_k(\mathcal{M}_{\bullet}(D))\otimes \mathcal{A})=0$$
for any $i>0$ and any integer $k$.

(2) If there exists an ample effective divisor with support contained in $\text{Supp}(D)$, then (1) also holds for line bundle $\mathcal{A}$ such that $\mathcal{A}-D$ is nef.
\end{thm}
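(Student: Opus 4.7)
The plan is to obtain Theorem \ref{main} as a formal consequence of the quasi-isomorphism provided by Theorem \ref{thm3} and the log Akizuki--Kodaira--Nakano vanishing provided by Theorem \ref{thm}, via the projection formula and the Leray spectral sequence.

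First, fix a log resolution $f\colon Y\to X$ of $(X,D)$ which is an isomorphism over $X\setminus Z$, and let $E$ denote the support of $f^*D$. Substituting $-k$ for $k$ in Theorem \ref{thm3} rewrites the Spencer complex as
$$\text{Sp}_k(\mathcal{M}_{\bullet}(D))\;\simeq\;\mathbf{R}f_*\bigl(\Omega_Y^{-k}(\log E)(-\lceil f^*D\rceil)\bigr)[n+k]$$
in the bounded derived category of coherent $\mathscr{O}_X$-modules. Since $\mathcal{A}$ is a line bundle (hence perfect), the projection formula applies and yields
$$\text{Sp}_k(\mathcal{M}_{\bullet}(D))\otimes\mathcal{A}\;\simeq\;\mathbf{R}f_*\bigl(\Omega_Y^{-k}(\log E)\otimes f^*\mathcal{A}(-\lceil f^*D\rceil)\bigr)[n+k].$$
Taking hypercohomology on $X$ and using Leray, one then obtains the identification
$$\mathbf{H}^i\bigl(X,\text{Sp}_k(\mathcal{M}_{\bullet}(D))\otimes\mathcal{A}\bigr)\;\simeq\;H^{i+n+k}\bigl(Y,\Omega_Y^{-k}(\log E)\otimes f^*\mathcal{A}(-\lceil f^*D\rceil)\bigr).$$

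For $k>0$ or $k<-n$, the sheaf $\Omega_Y^{-k}$ vanishes and the right-hand side is automatically zero. For $-n\le k\le 0$, set $p=-k$ and $q=i+n+k$, so that $p+q=n+i>n$ whenever $i>0$. Under the hypothesis of part (1), the ampleness of $\mathcal{A}-D$ allows us to invoke Theorem \ref{thm}(1), which gives the desired vanishing for all $p+q>n$; under the hypothesis of part (2), the existence of an ample effective divisor with support in $\text{Supp}(D)$ together with the nefness of $\mathcal{A}-D$ allows us to invoke Theorem \ref{thm}(2) instead. In either case the group vanishes for every $i>0$, which is exactly the statement of Theorem \ref{main}.

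The proof is therefore essentially a bookkeeping argument: once Theorem \ref{thm3} translates the Spencer complex into a pushforward of log differentials and Theorem \ref{thm} supplies the relevant Nakano-type vanishing on $Y$, no further input is required. There is no real obstacle at this step; the substantive work lies in the earlier two theorems, and in particular in verifying that the natural differential in the Spencer complex matches the one induced on the $\text{Gr}^F$-level from the log resolution.
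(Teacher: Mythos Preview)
Your proof is correct and follows exactly the same approach as the paper, which simply states that the assertions follow from Theorem~\ref{prop} (=Theorem~\ref{thm3}) and Theorem~\ref{aa} (=Theorem~\ref{thm}); you have merely spelled out the projection formula, the shift bookkeeping, and the case split $-n\le k\le 0$ versus $k\notin[-n,0]$ that are implicit in that one-line argument.
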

\begin{proof}
The assertions follow from Theorem \ref{prop} and the following Theorem.
\end{proof}
\begin{thm}[=Theorem \ref{thm}]\label{aa}
Let $X$ be a smooth complex projective variety of dimension $n$ and $D$ an effective $\mathbb{Q}$-divisor. Let $f:Y\rightarrow X$ be a log resolution of $(X,D)$  which is isomorphic over $X\setminus D$. Denote by $ E$ the support of $f^*D$. 

(1) If $\mathcal{A}$ is a line bundle on $X$ such that 
$\mathcal{A}-D$ is ample, then
$$H^q\big(Y,\Omega^p_Y(\log  E)\otimes f^*\mathcal{A}(-\lceil f^*D\rceil)\big)=0$$
for any $p+q>n$.

(2) If there exists an ample effective divisor on $X$ with support contained in $\text{Supp}(D)$, then (1) also holds for line bundle $\mathcal{A}$ such that $\mathcal{A}-D$ is nef.
\end{thm}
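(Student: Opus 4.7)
The plan is to deduce Theorem~\ref{aa} from the logarithmic Akizuki-Nakano vanishing of \cite[Cor.~6.4]{EV92} via a cyclic covering construction that absorbs the fractional part of $f^{*}D$, and to reduce part (2) to part (1) by a standard perturbation of $D$.

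For part (1), set $\mathcal{L} := f^{*}\mathcal{A}(-\lceil f^{*}D\rceil)$ and $\Delta := \lceil f^{*}D\rceil - f^{*}D$, an effective $\mathbb{Q}$-divisor supported on the SNC divisor $E$ with all coefficients in $[0,1)\cap\mathbb{Q}$. The key observation is that, as $\mathbb{Q}$-divisors, $\mathcal{L}+\Delta = f^{*}(\mathcal{A}-D)$, which by hypothesis is nef and big on $Y$ (pullback of an ample bundle under a birational morphism). Choose a positive integer $N$ such that $N\Delta$ has integer coefficients, and form an $N$-fold cyclic cover $\pi\colon W\to Y$ branched along $N\Delta$, after passing to a further log resolution so that $W$ is smooth and $\pi^{-1}(E)_{\mathrm{red}}$ is SNC. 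By the Esnault-Viehweg eigenspace analysis (\cite[\S 3]{EV92}), the cyclic group action on $\pi_{*}\Omega_{W}^{\bullet}(\log \pi^{-1}E_{\mathrm{red}})$ decomposes this sheaf into eigensummands of the form $\Omega_{Y}^{\bullet}(\log E)\otimes\mathcal{M}^{-i}(\lfloor i\Delta\rfloor)$, where $\mathcal{M}$ is an auxiliary line bundle with $\mathcal{M}^{N}\cong\mathcal{O}_{Y}(N\Delta)$; tensoring with $\pi^{*}f^{*}\mathcal{A}$ and choosing $\mathcal{M}$ so that the $i=1$ eigenpiece is $\Omega_{Y}^{p}(\log E)\otimes\mathcal{L}$, the desired vanishing on $Y$ reduces to a log-AKN vanishing on $W$ for a line bundle $\widetilde{\mathcal{L}}$ whose $N$-th power is, up to an integer correction on the $E$-components, the pullback of $N(\mathcal{A}-D)$. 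Since this is nef and big on $W$, \cite[Cor.~6.4]{EV92} (in its Kawamata-Viehweg extension via perturbation by a small ample divisor) yields $H^{q}(W,\Omega_{W}^{p}(\log\pi^{-1}E_{\mathrm{red}})\otimes \widetilde{\mathcal{L}})=0$ for $p+q>n$, and the required vanishing on $Y$ follows by taking the relevant eigensummand of $\pi_{*}$.

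For part (2), let $A_{0}$ be the ample effective divisor with $\mathrm{Supp}(A_{0})\subseteq Z$ and fix a sufficiently small positive rational $\epsilon$. Set $D' := D-\epsilon A_{0}$; for $\epsilon$ small, $D'$ is effective with $\mathrm{Supp}(D')=Z$, so the same log resolution $f$ serves for $(X,D')$ with the same $E$, and $\mathcal{A}-D' = (\mathcal{A}-D)+\epsilon A_{0}$ is nef-plus-ample, hence ample. Writing $b_{i}$ and $c_{i}\geq 0$ for the coefficients of $E_{i}$ in $f^{*}D$ and $f^{*}A_{0}$ respectively, one has $\lceil b_{i}-\epsilon c_{i}\rceil=\lceil b_{i}\rceil$ for $\epsilon$ small: the claim is trivial when $c_{i}=0$; when $c_{i}>0$ and $b_{i}\in\mathbb{Z}$, both sides equal $b_{i}$ since $b_{i}-\epsilon c_{i}\in(b_{i}-1, b_{i})$; when $c_{i}>0$ and $b_{i}\notin\mathbb{Z}$, the choice $\epsilon c_{i}<\{b_{i}\}$ keeps the ceiling at $\lceil b_{i}\rceil$. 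Hence $\lceil f^{*}D'\rceil = \lceil f^{*}D\rceil$, and applying part (1) to $(\mathcal{A}, D')$ gives the desired vanishing.

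The main obstacle is the cyclic covering step: one must construct $\pi$ so that the auxiliary line bundle $\mathcal{M}$ and branch data are compatible, isolate the eigensummand corresponding precisely to $\Omega_{Y}^{p}(\log E)\otimes \mathcal{L}$, and then apply log-AKN on $W$ for a line bundle that is only nef and big rather than ample. Both steps are classical in the Esnault-Viehweg framework but require careful bookkeeping of denominators in $\Delta$ and of the integer corrections introduced by the floor $\lfloor i\Delta\rfloor$.
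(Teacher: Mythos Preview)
Your reduction of part (2) to part (1) by perturbing $D$ to $D'=D-\epsilon A_0$ is correct and coincides with the paper's argument.

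For part (1), however, there is a genuine gap. Your cyclic-cover strategy ends with an appeal to logarithmic Akizuki--Nakano vanishing on $W$ for a line bundle $\widetilde{\mathcal{L}}$ that is only nef and big, being (up to boundary corrections) the pullback of the ample class $\mathcal{A}-D$ along the generically finite map $f\circ\pi$. But Akizuki--Nakano vanishing, logarithmic or not, is \emph{false} for nef and big line bundles: already on the blowup $\sigma:S'\to S$ of a smooth surface at a point, with $L$ ample on $S$, one has $H^1(S',\Omega_{S'}^1\otimes\sigma^*L)\neq 0$ because $R^1\sigma_*\Omega_{S'}^1$ is a nonzero skyscraper sheaf. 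The ``Kawamata--Viehweg extension via perturbation by a small ample divisor'' you invoke is specific to $p=n$ and does not extend to arbitrary $(p,q)$. A secondary issue is that your cyclic cover branched along $N\Delta$ requires an $N$-th root of $\mathscr{O}_Y(N\Delta)$, which is precisely the global assumption the theorem is designed to bypass; Bloch--Gieseker covers could repair this, but you do not carry it out, and it would not fix the first problem.

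The paper avoids both obstacles by a different route. One chooses $l\gg 0$ with $\mathcal{A}^l(-lD)$ very ample and a general $H\in|\mathcal{A}^l(-lD)|$ such that $f^*H+E$ is simple normal crossing. This yields an honest isomorphism $f^*\mathcal{A}^l\cong\mathscr{O}_Y(l\cdot f^*D+f^*H)$, so the eigensheaf formalism of \cite[Theorem~3.2]{EV92} applies to $f^*\mathcal{A}$ with no root hypothesis; and since $X\setminus H$ is affine, so is $Y\setminus(E\cup f^*H)$, whence $E_1$-degeneration combined with Artin vanishing gives the vanishing for $\Omega_Y^p(\log(E+f^*H))\otimes f^*\mathcal{A}(-\lceil f^*D\rceil-f^*H)$. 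The auxiliary $H$ is then removed via the residue short exact sequence and induction on $\dim X$. In short, the affineness contributed by $H$ is what substitutes for the ampleness you are missing on $W$.
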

\begin{proof}
(1) Choose a sufficiently large integer $l$ such that $l\cdot D$ is an integral divisor and $\mathcal{A}^l(-l\cdot D)$ is very ample. Let $H$ be a divisor on $X$ corresponding to a sufficiently general section of $\mathcal{A}^l(-l\cdot D)$ such that $f^*H+ E$ is a reduced simple normal crossing divisor on $Y$. 

Since $H$ is very ample, $X\setminus H$ is affine, which implies that $X\setminus (H\cup D)$ is affine. As $f$ is isomorphic outside $D$, we see that $Y\setminus (f^*H\cup  E)$ is also affine.

Notice that  
$$f^*\mathcal{A}^l \simeq\mathcal{O}_Y(l\cdot f^*D+f^*H).$$ 
For $i=0,\cdots,l-1$, we denote
$$f^*\mathcal{A}^{(i)}=f^*\mathcal{A}^i(-\lfloor \frac{i}{l}\cdot (l\cdot f^*D +f^*H)\rfloor).$$
By \cite[Theorem 3.2]{EV92}, $(f^*\mathcal{A}^{(i)})^{-1}$ 
has an integrable logarithmic connection along $ E+f^*H$ such that the Hodge-to-de Rham spectral sequence degenerates at $E_1$ and the eigenvalues of the residue map belong to $[0,1)$ (hence condition $(*)$ of (2.8) in \cite{EV92} holds).
Since $Y\setminus ( E\cup f^*H)$ is affine, by Artin vanishing (see e.g. \cite[Corollary 5.2.18]{Dim04}),  
$$H^q\big(Y,(f^*\mathcal{A}^{(i)})^{-1}\otimes \Omega^p_Y(\log( E+f^*H))\big)=0$$
for $p+q>n$ and $i=0,\cdots,l-1$.
In particular, 
\begin{align*}
(f^*\mathcal{A}^{(l-1)})^{-1}&=f^*\mathcal{A}^{1-l}(\lfloor \frac{l-1}{l}\cdot (l\cdot f^*D +f^*H)\rfloor)\\
&=f^*\mathcal{A}^{1-l}(l\cdot f^*D +f^*H+\lfloor-\frac{1}{l}\cdot (l\cdot f^*D +f^*H)\rfloor)\\
&\simeq f^*\mathcal{A}(-\lceil \frac{1}{l}\cdot(l\cdot f^*D +f^*H)\rceil)\\
&=f^*\mathcal{A}(-\lceil f^*D \rceil- f^*H).
\end{align*}
The last equality holds because $f^*H$ is reduced and has no common components with $ E=(f^*D)_{red}$. Therefore, we have 
\begin{align}\label{eqA}
H^q\big(Y,f^*\mathcal{A}(-\lceil f^*D \rceil-f^*H)\otimes \Omega^p_Y(\log( E+f^*H))\big)=0
\end{align}
for any $p+q>n$.
Consider the short exact sequence
$$0\rightarrow \Omega^p_Y(\log( E+f^*H))(-f^*H)\rightarrow \Omega^p_Y(\log E)\rightarrow \Omega^p_{f^*H}(\log E\mid_{f^*H})\rightarrow 0.$$
(see \cite[Properties 2.3(c)]{EV92}). Applying $\otimes f^*\mathcal{A}(-\lceil f^*D \rceil)$, we obtain a short exact sequence
\begin{align*}
0\rightarrow \Omega^p_Y(\log( E+f^*H))\otimes f^*\mathcal{A}(-\lceil f^*D \rceil-f^*H)\rightarrow \Omega^p_Y(\log( E)\otimes f^*\mathcal{A}(-\lceil f^*D \rceil)\\
\rightarrow \Omega^p_{f^*H}(\log E\mid_{f^*H})\otimes f^*\mathcal{A}(-\lceil f^*D \rceil)\mid_{f^*H}\rightarrow 0.
\end{align*}
Since $(f^*H, E\mid_{f^*H})$ is a log resolution of $(H,D\mid_{H})$ and $(\mathcal{A}-D)\mid_{f^*H}$ is ample,
by inductive assumption on the dimension of $X$, we have 
\begin{align}\label{eqB}
H^q(f^*H,\Omega^p_{f^*H}(\log E\mid_{f^*H})\otimes f^*\mathcal{A}(-\lceil f^*D \rceil)\mid_{f^*H})=0
\end{align}
for $p+q>n-1$.
(\ref{eqA}) and (\ref{eqB}) imply that 
$$H^q(Y,\Omega^p_Y(\log  E)\otimes f^*\mathcal{A}(-\lceil f^*D \rceil)=0$$
for $p+q>n$.

(2) Let $H$ be an ample effective divisor on $X$ with support contained in $\text{Supp}(D)$. For $0<\varepsilon\ll 1$, we have
$$\lceil f^*(D-\varepsilon H)\rceil=\lceil f^*D\rceil.$$
and $$\text{Supp}\big(f^*(D-\varepsilon H)\big)=\text{Supp}(f^*D)= E.$$
Since $\mathcal{A}-D$ is nef, $\mathcal{A}-(D-\varepsilon H)$ is ample. Therefore the assertion in (2) follows from that in (1).
\end{proof}

\section{vanishing theorem for Hodge ideals of $\mathbb{Q}$-divisors}

With the help of Theorem \ref{main}, using similar arguments in the proof of \cite[Theorem F]{MP16}  and \cite[Theorem 12.1]{MP19a}, now we can eliminate the global assumption in the vanishing theorem for Hodge ideals of $\mathbb{Q}$-divisors. 

\begin{thm}[=Theorem \ref{thm1}]\label{van}
Let $X$ be a smooth projective complex variety of dimension $n$ and $D$ an effective $\mathbb{Q}$-divisor on $X$. Denote by $Z$ the support of $D$. For some $k\geq 0$, assume that $(X,D)$ is reduced $(k-1)$-log-canonical, i.e. $I_0(D)=\cdots=I_{k-1}(D)=\mathscr{O}_X(Z-\lceil D\rceil)$.

(1) Let $\mathcal{L}$ be a line bundle such that $\mathcal{L}-D$ is ample. If $\mathcal{L}((p+1)Z-\lceil D\rceil)$ is ample for all $0\leq p \leq k-1$, then
$$H^i(X,\omega_X \otimes \mathcal{L}(kZ)\otimes I_k(D))=0$$
for all $i \geq 2$. Moreover,
$$H^1(X,\omega_X \otimes \mathcal{L}(kZ)\otimes I_k(D))=0$$
holds if 
\begin{align}\label{N}
H^{k-p}\big(X,\Omega_X^{n-k+p}\otimes \mathcal{L}((p+1)Z-\lceil D\rceil)\big)=0
\end{align}
 for all $0\leq p \leq k-1$.

(2) If there exists an ample effective divisor with support contained in $Z=\text{Supp}(D)$, then (1) also holds for $\mathcal{L}$ such that $\mathcal{L}-D$ is nef.
\end{thm}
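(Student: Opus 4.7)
The plan is to follow the strategy of \cite{MP16, MP19a}: reduce the target vanishing to a statement about $H^i(X,\mathcal{M}_k(D)\otimes\mathcal{L})$ via the Hodge filtration, then bound the latter by applying Theorem \ref{main} to the Spencer complex $\text{Sp}_{k-n}(\mathcal{M}_\bullet(D))\otimes\mathcal{L}$ and chasing the resulting hypercohomology spectral sequence, controlling the individual $E_1$-terms with Akizuki--Kodaira--Nakano vanishing on $X$.

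First, set $\mathcal{F}_p:=\omega_X(pZ)\otimes I_p(D)$, so that $\mathcal{F}_p/\mathcal{F}_{p-1}\simeq\mathcal{M}_p(D)$ and the target is $H^i(X,\mathcal{F}_k\otimes\mathcal{L})$. Under the reduced $(k-1)$-log-canonical hypothesis, $\mathcal{F}_p=\omega_X((p+1)Z-\lceil D\rceil)$ is a line-bundle twist of $\omega_X$ for every $0\leq p\leq k-1$. Kodaira vanishing applied with the ample twist $\mathcal{L}((p+1)Z-\lceil D\rceil)$ gives $H^i(X,\mathcal{F}_p\otimes\mathcal{L})=0$ for all $i\geq 1$ and all such $p$. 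The long exact sequence associated with $0\to\mathcal{F}_{k-1}\to\mathcal{F}_k\to\mathcal{M}_k(D)\to 0$ then yields $H^i(X,\mathcal{F}_k\otimes\mathcal{L})\simeq H^i(X,\mathcal{M}_k(D)\otimes\mathcal{L})$ for $i\geq 1$.

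Next, I apply Theorem \ref{main}: $\mathbf{H}^i(X,K^\bullet)=0$ for all $i\geq 1$, where $K^\bullet:=\text{Sp}_{k-n}(\mathcal{M}_\bullet(D))\otimes\mathcal{L}$ has degree-$0$ term $\mathcal{M}_k(D)\otimes\mathcal{L}$. Splitting off this last term via the stupid truncation $K'^\bullet:=\sigma^{\leq -1}K^\bullet$ gives the short exact sequence of complexes $0\to K'^\bullet\to K^\bullet\to\mathcal{M}_k(D)\otimes\mathcal{L}[0]\to 0$; the induced long exact sequence together with the vanishing of $\mathbf{H}^i(K^\bullet)$ for $i\geq 1$ give $H^i(X,\mathcal{M}_k(D)\otimes\mathcal{L})\simeq\mathbf{H}^{i+1}(X,K'^\bullet)$ for $i\geq 1$. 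I then bound $\mathbf{H}^{i+1}(X,K'^\bullet)$ via the hypercohomology spectral sequence
$$E_1^{p,q}=H^q\bigl(X,\mathcal{M}_{k+p}(D)\otimes\wedge^{-p}T_X\otimes\mathcal{L}\bigr)\Longrightarrow\mathbf{H}^{p+q}(X,K'^\bullet),\quad -n\leq p\leq -1.$$
Terms with $k+p<0$ vanish automatically; for $0\leq k+p\leq k-1$, the log-canonical description together with the identification $\omega_X\otimes\wedge^{-p}T_X\simeq\Omega_X^{n+p}$ yields the short exact sequence
$$0\to\Omega_X^{n+p}\bigl((k+p)Z-\lceil D\rceil\bigr)\otimes\mathcal{L}\to\Omega_X^{n+p}\bigl((k+p+1)Z-\lceil D\rceil\bigr)\otimes\mathcal{L}\to\mathcal{M}_{k+p}(D)\otimes\wedge^{-p}T_X\otimes\mathcal{L}\to 0,$$
and Akizuki--Kodaira--Nakano vanishing applied to the middle and left terms (their ample twists are guaranteed in the required range by hypothesis) forces $E_1^{p,q}=0$ whenever $p+q>0$. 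Hence $\mathbf{H}^{i+1}(X,K'^\bullet)=0$ for $i\geq 2$, completing the $i\geq 2$ statement of part (1).

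The main obstacle is the $H^1$ case, which involves the borderline Nakano diagonal $p+q=0$ of the spectral sequence: each such $E_1^{p,-p}$ is identified, via the short exact sequence above, with a quotient of $H^{k-p}(X,\Omega_X^{n-k+p}\otimes\mathcal{L}((p+1)Z-\lceil D\rceil))$, and the extra hypothesis precisely ensures the vanishing of these quotients, allowing the spectral sequence argument to conclude $\mathbf{H}^2(X,K'^\bullet)=0$ and hence $H^1=0$. Finally, for part (2), pick an ample effective divisor $H$ with $\text{Supp}(H)\subset Z$; for sufficiently small rational $\varepsilon>0$, setting $D':=D-\varepsilon H$ preserves $\lceil D\rceil$, $Z$ and each $I_p(D)$ (by standard local constancy of Hodge ideals under small perturbations), so the hypotheses of (1) remain valid, while $\mathcal{L}-D'=(\mathcal{L}-D)+\varepsilon H$ becomes ample; applying (1) to $(X,D')$ then yields the vanishing for $(X,D)$.
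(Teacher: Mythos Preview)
Your overall strategy is exactly that of the paper: reduce to $H^i(X,\mathcal{M}_k(D)\otimes\mathcal{L})$, invoke Theorem~\ref{main} for the Spencer complex, and control the individual terms via Nakano vanishing and the short exact sequence coming from the reduced $(k-1)$-log-canonical hypothesis. However, there is a genuine error in the truncation step that makes the argument, as written, internally inconsistent.

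The stupid truncation $K'^\bullet:=\sigma^{\leq -1}K^\bullet$ is a \emph{quotient} of $K^\bullet$, not a subcomplex: for a cochain complex the natural map goes $K^\bullet\twoheadrightarrow\sigma^{\leq -1}K^\bullet$, with kernel $\sigma^{\geq 0}K^\bullet=\mathcal{M}_k(D)\otimes\mathcal{L}[0]$. Thus the correct short exact sequence is
\[
0\longrightarrow \mathcal{M}_k(D)\otimes\mathcal{L}[0]\longrightarrow K^\bullet\longrightarrow K'^\bullet\longrightarrow 0,
\]
the reverse of what you wrote. Consequently $H^i(X,\mathcal{M}_k(D)\otimes\mathcal{L})$ is (for $i\geq 1$) a quotient of $\mathbf{H}^{i-1}(X,K'^\bullet)$, not isomorphic to $\mathbf{H}^{i+1}$. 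You can see the inconsistency directly: your $E_1$-vanishing claim ``$E_1^{p,q}=0$ for $p+q>0$'' is correct, but it already gives $\mathbf{H}^m(K'^\bullet)=0$ for all $m\geq 1$; combined with your claimed isomorphism $H^i\simeq\mathbf{H}^{i+1}$ this would force $H^1=0$ \emph{without} the extra hypothesis~\eqref{N}, contradicting the structure of the theorem. With the correct exact sequence, the bookkeeping lines up: $E_1^{p,q}=0$ for $p+q\geq 1$ gives $\mathbf{H}^{i-1}(K'^\bullet)=0$ for $i\geq 2$, and the borderline diagonal $p+q=0$ governs $\mathbf{H}^0(K'^\bullet)$ and hence $H^1$, exactly as you describe afterwards. (The paper organizes this via the iterated truncations $E^\bullet_i=\sigma^{\geq i}K^\bullet$, which \emph{are} subcomplexes, but the content is the same.)

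For part~(2), your perturbation $D':=D-\varepsilon H$ on $X$ is a legitimate alternative to the paper's approach (which instead builds the nef case into Theorem~\ref{main} via the same perturbation performed on the log resolution). But the assertion that ``each $I_p(D)$'' is preserved is not quite ``standard''; it does follow from the paper's Proposition~\ref{pp}, since $\mathcal{M}_p(D)\simeq R^0f_*G^\bullet_{p-n}(-\lceil f^*D\rceil)$ depends only on $E$ and $\lceil f^*D\rceil$, and $\lceil f^*(D-\varepsilon H)\rceil=\lceil f^*D\rceil$ for $0<\varepsilon\ll 1$. You should say this explicitly rather than appealing to an unstated local-constancy principle. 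The paper's route avoids this issue entirely.
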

\begin{proof}
For $k=0$, we have $$\omega_X \otimes \mathcal{L}(kZ)\otimes I_k(D)\simeq \mathcal{M}_k(D)\otimes \mathcal{L}.$$
For $k\geq 1$, note that $I_{k-1}(D)=\mathscr{O}_X(Z-\lceil D \rceil)$, we have a short exact sequence
$$0\rightarrow \omega_X\big((k-1)Z\big)\otimes \mathcal{L}(Z-\lceil D\rceil)\rightarrow \omega_X(kZ) \otimes \mathcal{L} \otimes I_k(D)\rightarrow \mathcal{M}_k(D)\otimes \mathcal{L}\rightarrow 0.$$
Since $\mathcal{L}(kZ-\lceil D\rceil)$ is ample, by using Kodaira vanishing, we see that the vanishing we are aiming for is equivalent to the same statement for 
$$H^i(X,\mathcal{M}_k(D)\otimes \mathcal{L}).$$

\begin{cla}\label{cla}
Let $\mathcal{L}$ be a line bundle on $X$ such that $\mathcal{L}-D$ is ample (or nef when the condition in Theorem \ref{van}(2) is satisfied).
Let $r$ be a positive integer. If $$H^i(X,\mathcal{M}_p(D)\otimes \wedge^{k-p}T_X \otimes \mathcal{L})=0$$ for any $p<k$ and any $i\geq k-p+r-1$, then
$$H^i(X,\mathcal{M}_k(D)\otimes \mathcal{L})=0 \quad \text{for all } i\geq r$$
\end{cla}
\begin{proof}[Proof of the claim]
By definition, $\text{Sp}_{k-n}(\mathcal{M}_{\bullet}(D))=$
$$[\mathcal{M}_{k-n}(D)\otimes\wedge^n T_X\stackrel{\widetilde{d}}{\longrightarrow} \mathcal{M}_{k-n+1}(D)\otimes\wedge^{n-1} T_X\stackrel{\widetilde{d}}{\longrightarrow} \cdots \stackrel{\widetilde{d}}{\longrightarrow}\mathcal{M}_{k}(D)\otimes\wedge^0 T_X]$$
placed in degrees $-n,-n+1,\cdots,0$. Denote by $E^{\bullet}$ the complex $\text{Sp}_{k-n}(\mathcal{M}_{\bullet}(D))\otimes \mathcal{L}$. By Theorem \ref{main}, 
$$\mathbf{H}^i(X,E^{\bullet})=0 \quad \text{for all } i>0.$$
For any integer $i$, let $E_i^{\bullet}$ be the subcomplex of $E^{\bullet}$ given by $E_i^p=0$ for $p<i$ and  $E_i^p=E^p$ for $p\geq i$. Then 
$$E_0^{\bullet}=\mathcal{M}_k(D)\otimes \mathcal{L}\quad 
\text{ and } \quad E_{-n}^{\bullet}=E^{\bullet}.$$
We have a short exact sequence
$$0\rightarrow E^{\bullet}_{-p+1}\rightarrow E^{\bullet}_{-p} \rightarrow \mathcal{M}_{k-p}(D)\otimes \wedge^{p}T_X\otimes \mathcal{L}[p]\rightarrow 0.$$
Because of the hypothesis in the claim, $$H^i(X,\mathcal{M}_{k-p}(D)\otimes \wedge^pT_X \otimes \mathcal{L}[p])=0$$ for $p>0$ and $i\geq r-1$. It follows  that
$$\mathbf{H}^i(X,E^{\bullet}_{-p+1})\simeq \mathbf{H}^i(X,E^{\bullet}_{-p}) \quad \text{for } i\geq r \text{ and } p>0.$$
Therefore,
$$H^i(X,\mathcal{M}_k(D)\otimes \mathcal{L})=\mathbf{H}^i(X,E^{\bullet}_0)=\mathbf{H}^i(X,E^{\bullet}_{-n})=\mathbf{H}^i(X,E^{\bullet})=0$$
for $i\geq r$.
\end{proof}
Now we return to the proof of Theorem \ref{van}. Consider the short exact sequence
$$0\rightarrow \omega_X\big((p-1)Z\big)\otimes I_{p-1}(D)\rightarrow  \omega_X(pZ)\otimes I_p(D)\rightarrow \mathcal{M}_p(D)\rightarrow 0.$$
Applying $\otimes \wedge^{k-p}T_X\otimes \mathcal{L}$, we obtain 
\begin{equation}\label{exact}
\begin{aligned}
0\rightarrow \Omega^{n-k+p}_X\otimes \mathcal{L}\big((p-1)Z\big)\otimes I_{p-1}(D)\rightarrow  \Omega^{n-k+p}_X\otimes \mathcal{L}(pZ)\otimes I_{p}(D)\\
\rightarrow \mathcal{M}_p(D)\otimes \wedge^{k-p}T_X\otimes \mathcal{L}\rightarrow 0.
\end{aligned}
\end{equation}
Since 
$$I_p(D)= \begin{cases}
\mathscr{O}_X(Z-\lceil D \rceil), & 0\leq p<k\\
0, & p<0
\end{cases}
$$
and $\mathcal{L}\big((p+1)Z-\lceil D\rceil\big)$ is ample for $0\leq p<k$, we see that
$\mathcal{L}(pZ)\otimes I_p(D)$ is a ample bundle or a zero bundle for $p<k$. Therefore,
\begin{equation}\label{22}
\begin{aligned}
&H^i\big(\Omega^{n-k+p}_X\otimes \mathcal{L}((p-1)Z)\otimes I_{p-1}(D)\big)\\
=&H^i(\Omega^{n-k+p}_X\otimes \mathcal{L}(pZ)\otimes I_{p}(D))=0
\end{aligned}
\end{equation}
for $i\geq k-p+1$ and $p<k$. By taking the long exact sequence  in cohomology groups associated with (\ref{exact}), we obtain the same statement for
$$H^i(X,\mathcal{M}_p(D)\otimes \wedge^{k-p}T_X\otimes \mathcal{L}).$$
By Claim \ref{cla} (taking $r=2$), we have
$$H^i(X,\mathcal{M}_k(D)\otimes \mathcal{L})=0 \quad \text{for } i\geq 2.$$ 
Moreover, if $H^{k-p}\big(X,\Omega_X^{n-k+p}\otimes \mathcal{L}((p+1)Z-\lceil D\rceil)\big)=0$ for all $0\leq p <k$, then we have 
\begin{align}\label{33}
H^{k-p}(X,\Omega^{n-k+p}_X\otimes \mathcal{L}(pZ)\otimes I_{p}(D))=0\quad \text{for all } p<k.
\end{align}
(\ref{exact}), (\ref{22}) and (\ref{33}) imply that 
$$H^i(X,\mathcal{M}_p(D)\otimes \wedge^{k-p}T_X\otimes \mathcal{L})=0$$
for $i\geq k-p$ and $p<k$. By Claim \ref{cla} (taking $r$=1), we have 
$$H^i(X,\mathcal{M}_k(D)\otimes \mathcal{L})=0 \quad \text{for } i\geq 1.$$ 
\end{proof}

\begin{rem}[\textbf{Toric varieties}]
As in \cite[Corollary 25.1]{MP16} and \cite[Remark 12.3]{MP19a}, if $X$ is a toric variety, with the help of the Bott-Danilov-Steenbrink vanishing theorem, we can remove the Nakato-type vanishing assumption (\ref{N}) in the above theorem. In this setting, Dutta \cite[Theorem A]{Dut18} proves a stronger vanishing theorem under the global assumption. Similarly, the global assumption in her statement can be removed.

\end{rem}

\begin{rem}[\textbf{Projective space, abelian varieties}]
As in \cite[Theorem 25.3 and 28.2]{MP16} and \cite[Variant 12.5 and 12.6]{MP19a}, Theorem \ref{van} on $\mathbb{P}^n$ and abelian varieties holds for much weaker hypotheses. 
\end{rem}

On $\mathbb{P}^n$, the global assumption is equivalent to the degree $d$ of the $\mathbb{Q}$-divisor $D$ is an integer. So we can remove the condition that $d$ is integer in \cite[Variant 12.5]{MP19a}.

\begin{var} Let $D$ be an effective $\mathbb{Q}$-divisor of degree $d>0$ on $\mathbb{P}^n$ ($d$ is a rational number). Denote by $Z$ the support of $D$. For any integer $\ell\geq d-n-1$, we have
$$H^i(\mathbb{P}^n, \mathscr{O}_{\mathbb{P}^n}(\ell)\otimes \mathscr{O}_{\mathbb{P}^n}(kZ)\otimes I_k(D))=0  \quad \text{for any } i>0 \text{ and } k\in\mathbb{Z}.$$
\end{var}
\begin{proof}
Since $$\mathcal{M}_p(D)\simeq\mathscr{O}_{\mathbb{P}^n}(-n-1)\otimes \frac{\mathscr{O}_{\mathbb{P}^n}(pZ)\otimes I_p(D)}{\mathscr{O}_{\mathbb{P}^n}((p-1)Z)\otimes I_{p-1}(D)},$$
to prove the assertion in the corollary, it suffices to prove that for any integers $p\leq k$ and $q \geq d$
$$H^i(\mathbb{P}^n,\mathcal{M}_p(D)\otimes \mathscr{O}_{\mathbb{P}^n}(q))=0 \quad \text{for all } i>0.$$
We prove it by increasing induction on $k$. If $k\leq 0$, $\mathcal{M}_p(D)=0$ for all $p\leq k$, so there is nothing further to prove. Otherwise, assume that the assertion holds for $k-1$.
Note that when $0\leq j<n$, there is a long exact sequence (the Koszul resolution of $\wedge^jT_{\mathbb{P}^n}$)
\begin{align}\label{la}
0\rightarrow \bigoplus \mathscr{O}_{\mathbb{P}^n} \rightarrow \bigoplus\mathscr{O}_{\mathbb{P}^n}(1)\rightarrow \cdots \rightarrow\bigoplus\mathscr{O}_{\mathbb{P}^n}(j)\rightarrow \wedge^{j} T_{\mathbb{P}^n}\rightarrow 0.
\end{align}
and when $j=n$, we have $\wedge^{n} T_{\mathbb{P}^n}\simeq \mathscr{O}_{\mathbb{P}^n}$. Therefore, the inductive assumption implies that for any integers $p\leq k-1$ and $q \geq d$,
$$H^i(X,\mathcal{M}_p(D)\otimes \wedge^{k-p}T_X \otimes \mathscr{O}_{\mathbb{P}^n}(q))=0 \quad \text{for all } i>0.$$
Since $D$ is ample and $\mathscr{O}_{\mathbb{P}^n}(q)-D$ is nef for $q\geq d$, the desired assertion follows from Claim \ref{cla} (taking $r=1$).
\end{proof}

On abelian varieties, we have 

\begin{var} Let $X$ be an abelian variety and $D$ be an effective $\mathbb{Q}$-divisor on $X$. Denote by $Z$ the support of $D$. If $\mathcal{L}$ is a line bundle on $X$ such that $\mathcal{L}-D$ is ample. Then
$$H^i(X,\mathcal{L}(kZ)\otimes I_k(D))=0 \quad \text{for any } i>0 \text{ and } k\in\mathbb{Z}.$$
Moreover, if $D$ is ample, then the conclusion holds for $\mathcal{L}$ such that $\mathcal{L}-D$ is nef.
\end{var}
\begin{proof}
The assertion follows from the same argument in the proof of the previous variant, once we replace (\ref{la}) by the fact that $\wedge^j T_X$ is a trivial vector bundle for any $j$.
\end{proof}

\end{document}